\title{Tropicalizing the Graph Profile of Some Almost-Stars}
\date{}
\author{Maria Dasc\u{a}lu}
\address{Department of Mathematics and Statistics,
Lederle Graduate Research Tower, 1623D,
University of Massachusetts Amherst,
710 N. Pleasant Street,
Amherst, MA 01003} \email{mdascalu@umass.edu}
\author{Annie Raymond}
\address{Department of Mathematics and Statistics,
Lederle Graduate Research Tower, 1623D,
University of Massachusetts Amherst,
710 N. Pleasant Street,
Amherst, MA 01003} \email{raymond@math.umass.edu}
\thanks{Maria Dasc\u{a}lu and Annie Raymond were partially supported by NSF grant DMS-2054404.}
\begin{document}

\maketitle

\begin{abstract}
    Many important problems in extremal combinatorics can be stated as certifying polynomial inequalities in graph homomorphism numbers, and in particular, many ask to certify pure binomial inequalities. For a fixed collection of graphs $\U$, the \emph{tropicalization of the graph profile of $\U$} essentially records all valid pure binomial inequalities involving graph homomorphism numbers for graphs in $\U$. Building upon ideas and techniques described by Blekherman and Raymond in 2022, we compute the tropicalization of the graph profile for $K_1$ and $S_{2,1^k}$-trees, almost-star graphs with one branch containing two edges and $k$ branches containing one edge. This allows pure binomial inequalities in homomorphism numbers (or densities) for these graphs to be verified through an explicit linear program where the number of variables is equal to the number of edges in the biggest $S_{2,1^k}$-tree involved.
\end{abstract}

\section{Introduction}

The \emph{number of homomorphisms} from a graph $H$ to a graph $G$, denoted by $\hom(H;G)$, is the number of maps from $V(H)$ to $V(G)$ that yield a graph homomorphism, i.e., that map every edge of $H$ to an edge of $G$. Many important problems and results in extremal graph theory can be framed as certifying the validity of polynomial inequalities in the number of graph homomorphisms which are valid on all graphs. In particular, in many cases, those polynomials are \emph{pure binomial inequalities}. For example, Sidorenko's conjecture \cite{Sid93} can be stated as $\hom(P_0;G)^{2|E(H)|-|V(H)|} \cdot \hom(H;G) \geq  \hom(P_1;G)^{|E(H)|}$ for any bipartite graph $H$, where $P_i$ denotes the path with $i$ edges. The Erd\H os-Simonovits Theorem \cite{erdossimonovits}, recently proven in \cite{saglam} and then generalized in \cite{BR}, states that $\hom(P_{2u};G)^2 \cdot \hom(P_{2v+1};G)^{2v-1-2u} \geq \hom(P_{2v-1}; G)^{2v+1-2u}$ for $u<v$ (the original conjecture was for $u=0$). One last example is the Kruskal-Katona Theorem \cite{Kruskal, Katona} which states that $\hom(K_p; G)^q \geq \hom(K_q; G)^p$ for $2\leq p < q$, where $K_m$ is the complete graph on $m$ vertices. 

\emph{Graph profiles} essentially record all valid polynomial inequalities in homomorphism numbers of some given graphs. In \cite{BRST2, BR}, it was shown that the \emph{tropicalization} of graph profiles is a way of capturing all true pure binomial inequalities. Graph profiles are complicated objects---for instance, they need not be semialgebraic. Very few two-dimensional density profiles are known, e.g., \cite{RazTriangle, Nikiforov, MR3549620}, and none are known in higher dimensions. Though it still retains interesting information, the tropicalization of a graph profile is a much simpler object: it is always a closed convex cone and it has been computed for some arbitrarily large families of graphs \cite{BRST2,BR}. Moreover, all tropicalizations computed thus far have been rational polyhedral cones. 

In this paper, for $m\in \NN$, we explicitly compute the rational polyhedral cone forming the tropicalization of the graph profile for the collection of graphs $\mathcal{U}=\{S_0, S_{2,1^0}, S_{2,1^1}, \ldots, S_{2,1^{m-1}}\}$, where $S_0$ is a single vertex and $S_{2,1^k}$ is the tree where $V(S_{2, 1^k}) = \{1, 2, \ldots, k+3\}$ and $E(S_{2,1^k}) = \{\{1, j\} \mid j \in \{2, 3, \ldots, k+2\}\} \cup \{\{k+2, k+3\}\}$, i.e., a star where one branch is elongated to have length two. The notation comes from thinking of the branch lengths of this almost-star as a partition of the number of edges in the graph. The tree $S_{2, 1^3}$ is depicted in Figure \ref{fig:s213tree}. As a consequence, the validity of any pure binomial inequality in the graphs of $\U$ can be checked in a finite way from an explicit finite collection of binomial inequalities, and this can be done through a linear program.

\begin{figure}
    \centering
    \includegraphics[scale=.5]{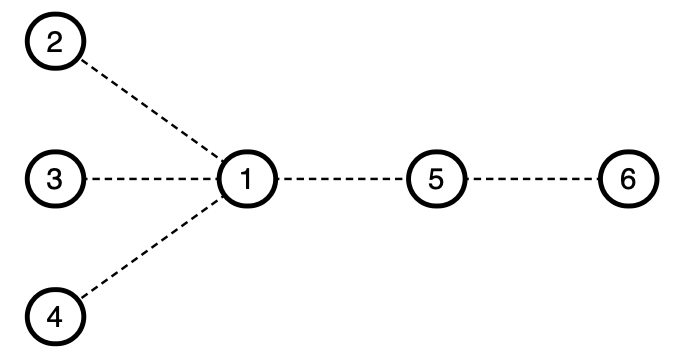}
    \caption{The $S_{2,1^3}$ tree.}
    \label{fig:s213tree}
\end{figure}

We now make these ideas precise.

\subsection{Definitions, Results and Proof Strategies:}

 The {\em number graph profile} of a collection of connected graphs $\mathcal{U} = \{H_1, \ldots, H_s \}$, denoted as $\mathcal{N}_\mathcal{U}$,
 is the set of all vectors $(\hom(H_1;G), \hom(H_2;G), \ldots, \hom(H_s;G))$ as $G$ varies over all graphs. For any $\mathcal{U}$, the number graph profile $\NU$ is contained in $\mathbb{N}^s$. Understanding all $s$-tuples that can occur as homomorphism numbers in $\mathcal{U}$ is essentially equivalent to understanding all polynomial inequalities in homomorphism numbers which are valid on all graphs. It is known that the problem of checking whether a polynomial expression in homomorphism numbers is nonnegative on all graphs is undecidable \cite{ioannidisramakrishnan}. 
 
\emph{A pure binomial inequality} has the form $\mathbf{x}^{\bm{\alpha}} \geq \mathbf{x}^{\bm{\beta}}$ where $\mathbf{x} \in \mathbb{R}^s_{\geq 0}$ with $\bm{\alpha}, \bm{\beta} \in \mathbb{R}_{\geq 0}^s$. As mentioned in the introduction, pure binomial inequalities in homomorphism numbers are of interest in extremal graph theory. To study such inequalities, let $\log \,:\, \RR^s_{>0} \rightarrow \RR^s$ be defined as $\log({\bf v}) := (\log(v_1), \ldots, \log (v_s))$. 
For a set $\mathcal{S} \subseteq \RR^s_{\geq 0}$, we define $\log(\mathcal{S}) := \log (\mathcal{S} \cap \RR^s_{>0})$.
The tropicalization of $\mathcal{S}$, which is also called the {\em logarithmic limit set} of $\mathcal{S}$, is defined to be
$$\trop(\mathcal{S}) := \lim_{\tau \rightarrow \infty} \log_{\tau}(\mathcal{S}).$$ In \cite[Lemma 2.2]{BRST2}, it was shown that if $\mathcal{S}\subseteq \mathbb{R}^s_{\geq 0}$ has the Hadamard property, then $\trop(\mathcal{S})$ is a closed convex cone. Moreover, the tropicalization of $\mathcal{S}$ is equal to the closure of the conical hull of $\log(\mathcal{S})$, and so $\trop(\NU) = \textup{cl}(\textup{cone}(\log(\NU)))$. Each pure binomial inequality in numbers corresponds to \emph{a linear inequality in logarithms:} $\langle \bm{\alpha}, \log \mathbf{x} \rangle \geq \langle \bm{\beta}, \log \mathbf{x} \rangle$. The extreme rays of the dual cone $\tropNU^*$ generate all of the pure binomial inequalities valid on $\NU$. Note that $\trop(\NU) \subseteq \mathbb{R}_{\geq 0}^s$ since $\NU \cap \RR^s_{>0}$ contains only points where every coordinate is at least one. In \cite[Proposition 2.4]{BR}, it was shown that no spurious binomial inequalities are added by removing points with zero coordinates from $\NU$. 

Blekherman and Raymond conjectured that $\tropNU$ is in fact a rational polyhedral cone for any finite collection of graphs $\mathcal{U}$ \cite[Conjecture 2.14]{BR}. They proved that this is the case when $\U$ consists of complete graphs, even cycles, odd cycles or paths by fully characterizing $\tropNU$ in those settings. They also showed that the conjecture holds for the following collections of graphs.

\begin{theorem}{\cite[Theorems 3.2 and 1.2]{BR}} \label{thm:informalchordalseriesparallel}
Let $\mathcal{U}$ be a finite collection of chordal series-parallel graphs. Then $\tropNU$ is a
rational polyhedral cone. Furthermore, there exists a finite collection of binomial inequalities such that any pure binomial inequality in the graphs of $\U$ can be deduced in a finite way from this finite collection.
\end{theorem}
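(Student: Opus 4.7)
The plan is to prove both parts simultaneously by exhibiting $\tropNU$ as an explicit rational polyhedral cone together with a finite list of binomial inequalities generating its dual. Following the strategy of \cite{BRST2,BR} for paths, cycles, and complete graphs, I would use the identity $\trop(\NU)=\textup{cl}(\textup{cone}(\log(\NU)))$ and establish two matching containments: a lower bound, by producing a finite family of test graphs $G$ whose log-homomorphism vectors realize every extreme ray of the candidate cone; and an upper bound, by proving a finite list of pure binomial inequalities that cuts out exactly that cone on all of $\NU$.

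The structural input comes from the decomposition of chordal series-parallel graphs as (subgraphs of) $2$-trees: every $H\in\mathcal{U}$ is built from an edge by iteratively attaching a vertex to an existing edge to form a triangle. The resulting clique-tree decomposition yields a transfer-matrix / product formula expressing $\hom(H;G)$ in terms of finitely many \emph{local statistics} of $G$, such as degrees, edge counts, and triangle counts at each bag, which are themselves homomorphism counts of very small graphs. Tropicalizing such a formula writes $\log\hom(H;G)$ as an integer linear combination of logs of these local statistics, which already forces the cone to be rational polyhedral. For the lower bound I would then assemble witnesses --- balanced complete bipartite blow-ups, triangle blow-ups, and gadgets obtained by disjoint unions and isolated vertices that amplify one local statistic at a time --- whose log-vectors span the cone cut out by the candidate binomial inequalities.

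The main obstacle is showing that this finite witness library is rich enough to realize every extreme ray, i.e., that no exotic pair $(H,G)$ with $H\in \mathcal{U}$ introduces an asymptotic direction outside the cone generated by the witnesses. This is exactly where the chordal series-parallel hypothesis is essential: it caps the ``local complexity'' of the $H$'s so that $\hom(H;G)$ genuinely factors through edge and triangle statistics of $G$, and so these gadgets suffice. Once the two cones coincide, the furthermore statement follows by LP duality: the finitely many facet-defining inequalities of $\tropNU$ are extreme rays of $\tropNU^*$, and every valid pure binomial inequality on $\mathcal{U}$ is a nonnegative combination of them, hence deducible in finitely many steps from the finite collection.
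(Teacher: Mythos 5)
This statement is cited from \cite{BR} (Theorems 3.2 and 1.2); the present paper does not reprove it, so there is no in-paper proof to compare against. The ingredients the paper does import are relevant, though: the Kopparty--Rossman LP characterization of homomorphism domination exponents for chordal-into-series-parallel (Theorem~\ref{thm:kr11}) and the fact (Corollary~2.9 of \cite{BR}) that every extreme ray of $\tropNU^*$ is spanned by a vector with at most one negative coordinate. Together these reduce the dual extreme rays to finitely many rational HDE problems, which is where rational polyhedrality comes from.

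Your proposal has a genuine gap at its central step. You claim that a transfer-matrix or clique-tree decomposition of a chordal series-parallel $H$ ``writes $\log\hom(H;G)$ as an integer linear combination of logs of local statistics, which already forces the cone to be rational polyhedral.'' This is not so. The dynamic-programming/transfer-matrix formula expresses $\hom(H;G)$ as a \emph{sum over assignments to the bags} of products of local counts. Sums do not tropicalize to linear forms: under $\lim_{\tau\to\infty}\log_\tau(\cdot)$ a sum becomes a pointwise maximum, so what you get is a tropical polynomial (a max of finitely many linear forms) in the log-statistics, not a single linear form. The image of such a piecewise-linear map as $G$ ranges over all graphs is not visibly a convex cone, let alone a polyhedral one, so the ``already forces'' conclusion does not follow. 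Moreover, even if the map were linear, you would have merely shifted the problem: you would need to know that the profile of those ``local statistics'' (themselves homomorphism counts) is rational polyhedral, which is the same kind of claim you set out to prove.

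A second soft spot is the lower bound. You propose a finite witness library (blow-ups, disjoint unions, isolated-vertex gadgets) to realize every extreme ray, but you only assert that the chordal series-parallel hypothesis ``caps the local complexity'' so that these gadgets suffice; there is no mechanism given that bounds the number of extreme rays a priori or matches each one to a witness. The route that actually closes this is dual: Corollary~2.9 of \cite{BR} says every dual extreme ray is, up to sign normalization, a domination inequality $\hom(F_1;G)\ge\hom(F_2;G)^c$ with $F_1$ a disjoint union of graphs in $\U$ (hence chordal) and $F_2\in\U$ (hence series-parallel); Kopparty--Rossman then realize the optimal $c$ as the value of an explicit rational LP. That is what yields finiteness and rationality. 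Your ``furthermore'' paragraph (LP duality on the finished polyhedral cone) is fine once the cone is known to be rational polyhedral, but the step that establishes polyhedrality is exactly the one where your argument breaks.
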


Note that this theorem already implies that $\tropNU$ is a rational polyhedral cone for the collection of graphs we are interested in, namely $\U= \{S_0, S_{2,1^0}, S_{2,1^1}, \ldots, S_{2,1^{m-1}}\}$, since all of these graphs are chordal and series-parallel. The main contribution of this paper is finding an explicit description for $\tropNU$.

\begin{restatable*}[]{theorem}{maintheorem}
\label{thm:trop}
Let $\U = \{S_0, S_{2, 1^0}, S_{2, 1^1}, \ldots, S_{2, 1^{m-1}}\}$ and let

$$
Q = \left\{\yy \in \RR^{m+1} \middle\vert \begin{array}{l}
- y_1 + y_2  \geq 0 \\
4y_1 - 3y_2 \geq 0 \\
3y_1 -3y_3 + y_4 \geq 0 \\
y_1 + 2y_{m-1} -2y_m \geq 0 \\
y_0 + y_{m-1} - y_m \geq 0 \\
y_0 - 2y_1 + y_3 \geq 0 \\
y_{i-1} - 2y_i + y_{i+1} \geq 0 \quad \forall 2\leq i \leq m-1 \\
m\cdot y_{m-1} - (m-1)\cdot y_m \geq 0 
\end{array}\right\}.
$$ Then $\tropNU=Q$. 
\end{restatable*}

Having such an explicit description of $\tropNU$ allows one to check the validity of any pure binomial inequality in the graphs of $\U$ through an explicit linear program.  It is more convenient to check the validity of a binomial inequality for $\NU$ by checking the validity of the corresponding linear inequality for $\tropNU$. There, we simply need to show that this linear inequality can be written as a conical combination of the defining inequalities for $\tropNU$. If there is no such conical combination, then the original pure binomial inequality is not valid for $\NU$. Finding such a conical combination can be done via a linear program. Indeed, suppose one wants to check whether $S_0^{\alpha_0}\prod_{i=0}^{m-1} S_{2,1^i}^{\alpha_{i+1}} \geq S_0^{\beta_0}\prod_{i=0}^{m-1} S_{2,1^i}^{\beta_{i+1}}$ is a valid inequality for some $\alpha_i,\beta_i$ (some of which can be 0). This is equivalent to checking that $\alpha_0 y_0 + \sum_{i=1}^{m} \alpha_{i} y_i - \beta_0 y_0 - \sum_{i=1}^{m} \beta_{i} y_i \geq 0$ on $\tropNU$ (since $y_0 = \log(\hom(S_0;G))$ and $y_i=\log(\hom(S_{2,1^{i-1}};G))$ for $1 \leq i \leq m$). Thus one can simply minimize $\alpha_0 y_0 + \sum_{i=1}^{m} \alpha_{i} y_i - \beta_0 y_0 - \sum_{i=1}^{m} \beta_{i} y_i$ over the cone $Q$ of Theorem~\ref{thm:trop}. If the optimal value is 0, then the inequality is valid, and the dual solution gives the conical combination of inequalities of $Q$ that yields $\alpha_0 y_0 + \sum_{i=1}^{m} \alpha_{i} y_i - \beta_0 y_0 - \sum_{i=1}^{m} \beta_{i} y_i \geq 0$, i.e., how one recovers this inequality from the pure binomial inequalities associated to the inequalities in $Q$. Otherwise, the inequality is not valid. 

Note that these ideas carry on over to \emph{homomorphism densities}. The homomorphism density from a graph $H$ to a graph $G$, denoted as $t(H;G)$, is the probability that a map from $V(H)$ to $V(G)$ is a graph homomorphism, i.e., $t(H;G)=\frac{\hom(H;G)}{|V(G)|^{|V(H)|}}$. The \emph{density graph profile} $\GU$ of some finite collection of graphs $\U$  is the closure of the points $(t(H_1;G), t(H_2;G), \ldots, t(H_s;G))$ as $G$ varies over all graphs. It is more natural to write certain problems from extremal graph theory as certifying the validity of a polynomial inequality in densities. In this setting again, it is known that the problem of checking whether a polynomial expression in densities is nonnegative on all graphs is undecidable \cite{HN11}. Note that any pure binomial inequality in homomorphism densities, say $t(H_1;G)-t(H_2;G)\geq 0$ (where $H_1$ and $H_2$ need not be connected graphs), can be rewritten as a pure binomial inequality in homomorphism numbers. Indeed, $t(H_1;G)-t(H_2;G)=\frac{\hom(H_1;G)}{|V(G)|^{|V(H_1)|}} - \frac{\hom(H_2;G)}{|V(G)|^{|V(H_2)|}}$, and since $|V(G)|=\hom(K_1;G)$, the previous inequality can be rewritten as $\hom(K_1;G)^{|V(H_2)|-|V(H_1)|}\hom(H_1;G) - \hom(H_2;G)\geq 0$ if $|V(H_2)|\geq |V(H_1)|$ or $\hom(H_1;G) - \hom(K_1;G)^{|V(H_1)|-|V(H_2)|}\hom(H_2;G)\geq 0$ if $|V(H_1)|\geq |V(H_2)|$. Therefore, checking the validity of $t(H_1;G)-t(H_2;G)\geq 0$ over $\GU$ where $\U$ contains all connected components in $H_1$ and $H_2$ can be done by checking the corresponding pure binomial inequality in homomorphism numbers over $\NU$ if $\U$ contains $K_1$ or over $\mathcal{N}_{\mathcal{U}'}$ where $\U'=\U\cup \{K_1\}$ otherwise. In our setting, $\U$ contains $K_1=S_0$, so by Theorem~\ref{thm:trop}, the validity of any pure binomial inequality in densities in $\U$ can also be checked through the linear program described in the previous paragraph.

Our strategy to prove Theorem~\ref{thm:trop} is to first certify that all of the inequalities of $Q$ come from valid pure binomial inequalities on the profile $\NU$, thus showing that $\tropNU \subseteq Q$. Certification is done using some standard techniques as well as tools presented in \cite{koppartyrossman}. To show that $Q\subseteq \tropNU$,  we find all extreme rays of $Q$ and show that these rays are \emph{realizable}. We say a ray $\mathbf{r}$ is realizable if there exists a graph $G$ or a sequence of graphs $G_n$ on $n$ vertices such that  $\alpha(\log \hom(H_1;G), \ldots, \log \hom(H_s;G))=\mathbf{r}$ or $\alpha(\log \hom(H_1;G_n), \ldots, \log \hom(H_s;G_n))\rightarrow \mathbf{r}$ as $n\rightarrow \infty$ respectively for some constant $\alpha\in \mathbb{R}_{\geq 0}$. These sequences of graphs often arise by constructing blow-up graphs, and taking tensor powers and disjoint unions of those graphs. 

The reader might notice that all inequalities in $Q$ have a single negative term. This is no coincidence: Corollary 2.9 of \cite{BR} proves that each extreme ray of the dual cone $\tropNU^*$ for any finite collection of connected graphs $\U$ is spanned by a vector with at most one negative coordinate.

Another nice property of $\tropNU$ is that it is \emph{max-closed}: if $(x_1, \ldots, x_s), (y_1, \ldots, y_s)\in \tropNU$, then $(x_1, \ldots, x_s)\oplus (y_1, \ldots, y_s):=(\max\{x_1,y_1\}, \ldots, \max\{x_s,y_s\})\in \tropNU$. 
Given a cone  $\mathcal{S}\subset \mathbb{R}_{\geq 0}^s$, \emph{the double hull} of $\mathcal{S}$ is the smallest closed and max-closed convex cone containing $\mathcal{S}$.
A point $\mathbf{p}\in \mathcal{S}$ is said to be \emph{max-extreme} if whenever $\mathbf{p}=\mathbf{x}\oplus \mathbf{y}$ with $\mathbf{x},\mathbf{y} \in \mathcal{S}$, then $\mathbf{p}=\mathbf{x}$ or $\mathbf{p}=\mathbf{y}$. Furthermore, $\mathbf{p}\in \mathcal{S}$ is said to span a \emph{doubly extreme ray} of $\mathcal{S}$ if $\mathbf{p}$ spans an extreme ray of $\mathcal{S}$ and $\mathbf{p}$ is max-extreme.
Theorem 2.13 in \cite{BR} states  that a closed and max-closed convex cone in $\mathbb{R}^s_{\geq 0}$ is the double hull of its doubly extreme rays.

As a corollary of Theorem~\ref{thm:trop}, we rewrite the tropicalization $\tropNU$ as the double hull of five doubly extreme rays. 

\begin{restatable*}[]{corollary}{maincorollary}
\label{cor:dh}
The set $\tropNU$ is the double hull of the following five doubly extreme rays in $\RR^{m+1}$:  $\mathbf{d}_{1,m}:=(1, 0, 0, \ldots, 0)$, $\mathbf{d}_{2,m}:=\vec{1}$, $\mathbf{d}_{3,m}:=(1, 2, 2, 3, 4, \ldots, m)$, $\mathbf{d}_{4,m}:=(1, 3, 4, 5, \ldots, m+2)$, $\mathbf{d}_{5,m}:=(2, 4, 5, 7, \ldots, 2m+1)$.
\end{restatable*}

\subsection{Outline}
In Section~\ref{sec:inequalities}, we show that the defining inequalities of $Q$ in Theorem~\ref{thm:trop} are valid for $\tropNU$. In Section~\ref{sec:rays}, we show that the rays in Corollary~\ref{cor:dh} are present in $\tropNU$, and we use those to construct a larger family of rays in $\tropNU$. This larger family will turn out to be the extreme rays of $\tropNU$, as will be shown in the proof of Theorem~\ref{thm:trop} in Section~\ref{sec:results}. In Section~\ref{sec:background}, we briefly recall helpful properties of graph homomorphisms and of tropicalizations of graph profiles, as well as some results of Kopparty and Rossman (\cite{koppartyrossman}) which we use later on. 
\section{Background on tropicalizations, homomorphisms and Kopparty-Rossman}\label{sec:background}

We go over different concepts and results which are used to prove Theorem~\ref{thm:trop} and Corollary~\ref{cor:dh} as well as the lemmas that precede them. 

\subsection{Useful results about tropicalizations}\label{sec:tropresults}

Recall from the introduction that for vectors $\mathbf{x}, \mathbf{y} \in \mathbb{R}^s$, we let $\mathbf{x}\oplus \mathbf{y}$ denote their \emph{tropical sum}: $$\mathbf{x}\oplus \mathbf{y}=(\max\{x_1, y_1\}, \ldots, \max\{x_s, y_s\}).$$ A set $\mathcal{S}\subseteq \mathbb{R}^s$ is said to be \emph{max-closed} if for any $\mathbf{x}, \mathbf{y} \in \mathcal{S}$, we have $\mathbf{x}\oplus \mathbf{y}\in \mathcal{S}$. Corollary 2.3 in \cite{BR} proves that $\tropNU$ is a max-closed convex cone for any finite collection of connected graphs $\U$.

Recall also from the introduction that the \emph{double hull} of a cone $\mathcal{S}\subset \mathbb{R}^s_{\geq 0}$ is defined to be the smallest max-closed convex cone containing $\mathcal{S}$ and is denoted by $\operatorname{dh}(\mathcal{S})$. From Theorem 2.13 in \cite{BR}, we know that for any closed convex cone $\mathcal{S} \subseteq \RR^s_{\geq 0}$ that is also max-closed, $\mathcal{S}$ is equal to the double hull of its doubly extreme rays. In particular, this means that $\tropNU$ is equal to the double hull of its doubly extreme rays. Essential to the proof that $\tropNU$ is a max-closed convex cone (Lemma 2.2 of \cite{BR}) are the following two graph operations.  It has long been known that $\hom(H;G_1)+\hom(H;G_2)=\hom(H;G_1G_2)$ where $G_1G_2$ is the disjoint union of $G_1$ and $G_2$, and that $\hom(H;G_1)\cdot \hom(H;G_2)=\hom(H;G_1\times G_2)$ where $G_1\times G_2$ is the categorical product of $G_1$ and $G_2$. From Lemma~2.2 in \cite{BRST2}, to show that a conical combination of two points $\log(\mathbf{v}),\log(\mathbf{w}) \in \log(\NU)$ is in $\tropNU$, it suffices to show that $a_1 \log(\mathbf{v})+a_2\log(\mathbf{w})\in \tropNU$ for $a_1,a_2\in \NN$. Since $a_1 \log(\mathbf{v})+a_2\log(\mathbf{w})= \log(\mathbf{v}^{a_1} \cdot \mathbf{w}^{a_2})$, we see that this is true by using our graph operations. Similarly, from Lemma~2.2 in \cite{BR}, to show that a tropical combination of these two points is in $\tropNU$, it suffices to show that $a_1 \log(\mathbf{v})\oplus a_2\log(\mathbf{w})\in \tropNU$ for $a_1,a_2\in \NN$. It turns out that $a_1 \log(\mathbf{v})\oplus a_2\log(\mathbf{w})=\frac{\log(\mathbf{v}^{a_1 l}+\mathbf{w}^{a_2l})}{l}$ as $l\rightarrow \infty$, and so we see that the desired result holds since $\frac{\log(\mathbf{v}^{a_1 l}+\mathbf{w}^{a_2l})}{l}$ is in $\tropNU$ because of our graph operations. Therefore, taking the conical hull and the max-closure of any set of rays can also be understood in a graph theoretical way. 

\subsection{Gluing algebra}

In the introduction, we defined $\hom(H; G)$ when $H$ is an unlabeled graph to be the number of homomorphisms from $H$ to $G$. We now extend this definition for when $H$ is a partially labeled graph. This will be useful to prove inequalities in the next section. A graph is {\em partially labeled} if a subset $L$ of its vertices are labeled with elements of $\mathbb{N} := \{1,2,3,\ldots\}$ such that no vertex receives more than one label. If no
vertices of $H$ are labeled, then $H$ is {\em unlabeled}. Let $\vartheta: L \rightarrow V(G)$. Then for a partially labeled graph $H$, $\hom(H;G)$ is the number of homomorphisms $\varphi: V(H)\rightarrow V(G)$ that extend $\vartheta$. 

Furthermore, for two graphs (partially labeled or not), $\hom(H_1;G)\cdot \hom(H_2;G)=\hom(H_1 H_2;G)$ where $H_1H_2$ is the graph obtained by gluing $H_1$ and $H_2$ along vertices with the same labels, and replacing any doubled edge with a single edge. This is best illustrated by an example: $$\hom\left(\Htwo{2}{1}{3};G\right) \cdot \hom\left(\Htwo{1}{2}{};G\right) = \hom\left(\pthree{}{1}{2}{3};G\right).$$  

Note that multiplying two unlabeled graphs simply yields their disjoint union.

For a partially labeled graph $H$, $\left[\left[\hom(H; G)\right]\right]:=\sum_{\psi:L \rightarrow V(G)} \hom(H_\psi;G)$ where $H_\psi$ corresponds to the partially labeled graph where the vertices in $L$ are labeled according to $\psi$ instead of $\vartheta$. This operation yields the \emph{unlabeling} of $H$, i.e., $\left[\left[\hom(H; G)\right]\right]=\hom(H';G)$ where $H'$ is the unlabeled version of $H$. For example, $\left[\left[\hom\left(\Htwo{1}{2}{3};G\right)\right]\right] = \hom\left(\uHtwo;G\right).$

We will often use $H$ as a shorthand for $\hom(H;G)$, especially for the purposes of writing inequalities. By $H^k$, we denote both  $\hom(H;G)^k$ and $\hom(k \textup{ disjoint copies of } H; G)$ as they are equal. The previous example thus can be written as $\Htwo{2}{1}{3} \ \Htwo{1}{2}{} = \pthree{}{1}{2}{3},$ and unlabeling can be represented as $\left[\left[\Htwo{1}{2}{3}\right]\right] = \uHtwo.$

\subsection{Useful results of Kopparty and Rossman}\label{subsec:kr}

The concept of the homomorphism domination exponent was introduced in \cite{koppartyrossman}, though the idea behind it had been central to many problems in extremal graph theory for a long time. Let the \emph{homomorphism domination exponent} of a pair of graphs $F_1$ and $F_2$, denoted by $\HDE(F_1;F_2)$, be the maximum value of $c$ such that $\hom(F_1;G) \geq \hom(F_2;G)^c$ for every graph $G$. Note that this yields a pure binomial inequality. 

In \cite{koppartyrossman}, Kopparty and Rossman showed that $\HDE(F_1; F_2)$ can be found by solving a linear program in $2^{|V(F_2)|}$ variables when $F_1$ is chordal and $F_2$ is series-parallel. Since this is the case when $F_1$ and $F_2$ are unions of $S_{2,1^k}$'s, this result was very useful to help us figure out what $\tropNU$ should be. 
We now recall one of their results which we will use to prove one of our binomial inequalities in the next section.

Let $\Hom(F_1;F_2)$ be the set of homomorphisms from $F_1$ to $F_2$, and let $\mathcal{P}(F_2)$ be the polytope consisting of normalized $F_2$-polymatroidal functions:
\begin{align*}
\mathcal{P}(F_2)=\big\{p\in \mathbb{R}^{2^{|V(F_2)|}} | \,\, & p(\emptyset) = 0 &&\\
& p(V(F_2)) = 1 && \\
& p(A) \leq p(B) && \forall \  A \subseteq B \subseteq V(F_2)\\
& p(A\cap B) + p(A\cup B) \leq p(A)+p(B) && \forall \ A, B \subseteq V(F_2)\\
& p(A\cap B) + p(A\cup B) = p(A)+p(B) &&\forall \ A, B \subseteq V(F_2) \textup { such that } A\cap B \\
&&& \quad \quad \quad \textup{ separates } A\backslash B \textup{ and }B\backslash A\big\}.
\end{align*}
In the definition above, $A\cap B$ is said to separate $A\backslash B$ and $B\backslash A$ if there is no path in $F_2$ going from a vertex in $A\backslash B$ to a vertex in $B\backslash A$. 

\begin{theorem}[Kopparty-Rossman, 2011]\label{thm:kr11}
Let $F_1$ be a chordal graph and let $F_2$ be a series-parallel graph. Then

$$\HDE(F_1; F_2) = \min_{p\in \mathcal{P}(F_2)} \max_{\varphi\in \Hom(F_1;F_2)}  \sum_{S \subseteq \textup{MaxCliques}(F_1)} -(-1)^{|S|} p(\varphi(\cap S))\\$$
where $\textup{MaxCliques}(F_1)$ is the set of maximal cliques of $F_1$ and $\cap S$ is the intersection of the maximal cliques in $S$. 
\end{theorem}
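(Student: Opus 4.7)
The plan is to establish the equality by proving both directions of the inequality between $\HDE(F_1;F_2)$ and the min-max value $c^* := \min_{p \in \mathcal{P}(F_2)} \max_{\varphi \in \Hom(F_1;F_2)} \Phi(\varphi, p)$, where $\Phi(\varphi, p) := \sum_{S \subseteq \textup{MaxCliques}(F_1)} -(-1)^{|S|} p(\varphi(\cap S))$. Note first that $\mathcal{P}(F_2)$ is a compact polytope (bounded by monotonicity and normalization) and $\Hom(F_1;F_2)$ is finite, so both the minimum and the maximum are attained.

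For the lower bound $\HDE(F_1;F_2) \geq c^*$, I would run an entropy argument. Fix a graph $G$ with $\hom(F_2;G) > 0$ (otherwise the inequality is vacuous) and let $\psi$ be a uniformly random homomorphism from $F_2$ to $G$, so that $H(\psi) = \log \hom(F_2;G)$. Define the normalized entropy function $p_\psi(A) := H(\psi|_A)/H(\psi)$ for $A \subseteq V(F_2)$. First I would verify $p_\psi \in \mathcal{P}(F_2)$: normalization is immediate, monotonicity follows from entropy monotonicity, submodularity is the classical Shannon inequality $H(X) + H(Y) \geq H(X \cup Y) + H(X \cap Y)$, and the modularity condition when $A\cap B$ separates $A\setminus B$ from $B\setminus A$ in $F_2$ follows from conditional independence of $\psi|_{A\setminus B}$ and $\psi|_{B\setminus A}$ given $\psi|_{A\cap B}$ (since with no crossing edges, the two sides admit independent extensions once the separator is fixed). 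Feasibility of $p_\psi$ yields $\max_\varphi \Phi(\varphi, p_\psi) \geq c^*$, so some $\varphi^* \in \Hom(F_1;F_2)$ satisfies $\sum_S -(-1)^{|S|} H(\psi|_{\varphi^*(\cap S)}) \geq c^* \cdot H(\psi)$. The final step is to invoke chordality of $F_1$: the maximal cliques of a chordal graph form a clique tree, and a Shearer/M\"obius-type inclusion-exclusion identity bounds the left-hand side above by the joint entropy $H(\psi \circ \varphi^*)$; combined with the trivial bound $H(\psi \circ \varphi^*) \leq \log \hom(F_1;G)$ this closes the inequality.

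For the upper bound $\HDE(F_1;F_2) \leq c^*$, I would take $p^*$ achieving the minimum and build, for each integer $N$, a graph $G_N$ as a weighted blowup of $F_2$: replace each $v \in V(F_2)$ by a vertex class of size approximately $N^{\alpha_v}$ for exponents $\alpha_v$ derived from $p^*$ (after rational approximation and passing to tensor powers), placing an appropriate random bipartite structure between classes along each edge of $F_2$. The series-parallel hypothesis on $F_2$ lets one compute $\log \hom(F_2;G_N)/\log N \to 1$ inductively via series and parallel compositions, while any homomorphism $F_1 \to G_N$ factors through an index map $\varphi \in \Hom(F_1;F_2)$, and the chordal clique decomposition of $F_1$ bounds the number of homs factoring through $\varphi$ by $N^{\Phi(\varphi, p^*) + o(1)}$, yielding $\log \hom(F_1;G_N) \leq (c^* + o(1)) \log N$. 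Passing $N \to \infty$ finishes this direction.

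The main obstacle I expect is twofold. On the lower-bound side, the chordal inclusion-exclusion inequality for entropy, which converts the polymatroidal sum into a single joint entropy, is delicate: it depends on running intersection in the clique tree of $F_1$ and on the correct interpretation of the $S = \emptyset$ term in the sum. On the upper-bound side, the blowup construction must be tight in the limit, which requires the series-parallel hypothesis on $F_2$ to force matching series/parallel identities on both sides of the $\hom$ counts, together with a careful quantitative limit argument to realize a continuous $p^*$ by discrete graphs. The interplay between the chordal clique tree of $F_1$ and the series-parallel decomposition of $F_2$ is exactly what makes both steps go through simultaneously.
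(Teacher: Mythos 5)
First, a remark on scope: the paper you are working from does not prove this statement at all --- it is quoted as a black box from Kopparty and Rossman's 2011 paper --- so the comparison below is with their original argument. Your overall architecture (an entropy/polymatroid argument for $\HDE(F_1;F_2)\ge c^*$, and an extremal blow-up construction exploiting the series-parallel structure of $F_2$ for $\HDE(F_1;F_2)\le c^*$) is the right one and matches theirs.

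There is, however, a concrete error in the step that closes the lower bound. You claim that a M\"obius/Shearer-type identity for the clique tree of the chordal graph $F_1$ bounds $\sum_{S}-(-1)^{|S|}H(\psi|_{\varphi^*(\cap S)})$ \emph{above} by $H(\psi\circ\varphi^*)$ and then finish with $H(\psi\circ\varphi^*)\le\log\hom(F_1;G)$. The junction-tree inequality goes the other way: for any random vector $X$ indexed by the vertices of a chordal graph with clique tree $T$, one has $H(X)\le\sum_{C}H(X_C)-\sum_{(C,C')\in T}H(X_{C\cap C'})$, and the right-hand side is exactly your inclusion--exclusion sum. So the sum can strictly exceed $H(\psi\circ\varphi^*)$, and your chain of inequalities does not close. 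The correct move, and the one Kopparty and Rossman make, is to construct a \emph{new} random map $\chi\colon V(F_1)\to V(G)$ by sampling along the clique tree: give the root clique the law of $\psi\circ\varphi^*$ restricted to it, and extend each subsequent clique conditionally independently given its separator. This Markov extension has entropy exactly equal to the clique-minus-separator sum (hence to the inclusion--exclusion sum), and it is still supported on homomorphisms because every edge of $F_1$ lies in some maximal clique; then $\log\hom(F_1;G)\ge H(\chi)\ge c^*\,H(\psi)$ as desired. Two smaller points: the $S=\emptyset$ term must be excluded (or interpreted so that $p$ of it vanishes) for the sum to telescope to $\sum_C p(\varphi(C))-\sum_{\mathrm{sep}}p(\varphi(\mathrm{sep}))$ --- note that the worked example in Section~\ref{sec:inequalities} sums only over nonempty intersections --- and your upper-bound construction is only a sketch: realizing an arbitrary optimal $p^*\in\mathcal{P}(F_2)$ by vertex classes of size $N^{\alpha_v}$ with prescribed edge densities is where the series-parallel hypothesis is used in an essential way, and it is the genuinely hard half of the theorem, as you anticipate.
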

\section{Valid Binomial Inequalities for $S_{2,1^k}$-trees}\label{sec:inequalities}

In this section, using some standard techniques as well as the result of Kopparty and Rossman presented in Section~\ref{sec:background}, we derive some valid pure binomial inequalities for $\NU$ that yield the inequalities defining $Q$ in Theorem~\ref{thm:trop} after taking the log, thus showing that these are valid inequalities for $\tropNU$. Recall that $V(S_{2, 1^k}) = \{1, 2, \ldots, k+3\}$, and $E(S_{2,1^k}) = \{\{1, j\} \mid j \in \{2, 3, \ldots, k+2\}\} \cup \{\{k+2, k+3\}\}$. Finally, for ease for reading, in this section, we will use $H$ as a shorthand for $\hom(H; G)$ whenever there is no confusion.

First note that $S_{2,1^1} - S_{2,1^0} \geq 0$ and $S_{2,1^0}^4 - S_{2,1^1}^3\geq 0$ are inequalities involving only paths with two and three edges, and that these inequalities are already known (see \cite{koppartyrossman} for example). The following inequality is also trivially true.
\begin{theorem}
    We have that $S_0 S_{2, 1^{m-2}} \geq S_{2, 1^{m-1}}$ is a valid inequality for all graphs $G$.
\end{theorem}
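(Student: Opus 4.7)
The plan is to observe that $S_{2,1^{m-1}}$ is obtained from $S_{2,1^{m-2}}$ by attaching one extra pendant edge at the center vertex. Indeed, from the definition $E(S_{2,1^k}) = \{\{1,j\} \mid j \in \{2,\ldots,k+2\}\} \cup \{\{k+2,k+3\}\}$, going from $k = m-2$ to $k = m-1$ adds a single leaf vertex adjacent to the center vertex $1$ while leaving the length-$2$ branch (now using new labels) otherwise intact — up to isomorphism, we are simply attaching one more degree-$1$ neighbor to the hub.

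With this structural observation in hand, I would count homomorphisms directly. Any homomorphism $\psi \colon S_{2,1^{m-1}} \to G$ is determined by its restriction $\varphi$ to the subgraph $S_{2,1^{m-2}}$ together with the image of the extra pendant leaf $v^\ast$, and the only constraint on $\psi(v^\ast)$ is that it be a neighbor in $G$ of $\varphi(\text{center})$. Therefore
\[
\hom(S_{2,1^{m-1}};G) \;=\; \sum_{\varphi \colon S_{2,1^{m-2}} \to G} \deg_G(\varphi(\text{center})) \;\le\; |V(G)|\cdot \hom(S_{2,1^{m-2}};G),
\]
using only the trivial bound $\deg_G(v) \le |V(G)|$ for every $v \in V(G)$. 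Since $\hom(S_0;G) = |V(G)|$, this is precisely $S_0\, S_{2,1^{m-2}} \ge S_{2,1^{m-1}}$.

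Equivalently, in the gluing-algebra language of Section~\ref{sec:background}, let $H_1$ denote $S_{2,1^{m-2}}$ with the center vertex carrying label $1$, and let $H_2$ denote a single edge with one endpoint carrying label $1$ (and the other unlabeled). Then $H_1 H_2$ is exactly $S_{2,1^{m-1}}$ with its center labeled $1$, so unlabeling yields $[[H_1 H_2]] = S_{2,1^{m-1}}$. Since $\hom(H_2|_{1\to v};G) = \deg_G(v) \le |V(G)| = \hom(S_0;G)$ for every $v \in V(G)$, summing over labels gives the same bound.

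There is essentially no obstacle here: the inequality is a one-line counting argument because the right-hand graph differs from $S_{2,1^{m-2}}$ by a single pendant edge, and pendant edges contribute a factor bounded by $|V(G)|$. The only care needed is the bookkeeping in matching the vertex sets under the definition $V(S_{2,1^k}) = \{1,\ldots,k+3\}$ to verify that the added edge is indeed attached at the center (vertex $1$), which the explicit edge set makes immediate.
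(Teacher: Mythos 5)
Your proof is correct, but it takes a somewhat different route from the paper. The paper exhibits an explicit surjective graph homomorphism $\varphi$ from the disjoint union $S_0\, S_{2,1^{m-2}}$ onto $S_{2,1^{m-1}}$ (sending the isolated vertex to the new pendant leaf, fixing vertices $1,\dots,m-1$, and shifting the two-edge branch by one), and then invokes the standard fact that a surjective homomorphism $H_1 \twoheadrightarrow H_2$ forces $\hom(H_1;G)\ge\hom(H_2;G)$ for every $G$ via injective precomposition. You instead unpack the inequality by a direct count: every homomorphism from $S_{2,1^{m-1}}$ restricts to one from the subgraph $S_{2,1^{m-2}}$, with the extra pendant leaf constrained to land in the neighborhood of the center's image, giving $\hom(S_{2,1^{m-1}};G)=\sum_\varphi \deg_G(\varphi(\text{center}))\le |V(G)|\cdot\hom(S_{2,1^{m-2}};G)=\hom(S_0;G)\cdot\hom(S_{2,1^{m-2}};G)$. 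Both arguments rest on the same structural observation (one additional pendant edge at the hub) and are elementary; the paper's version leans on a reusable precomposition principle while yours is more self-contained and also phrases cleanly in the gluing-algebra language the paper sets up. Either would serve as a proof.
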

\begin{proof}
    The inequality holds since there is a surjective homomorphism $\varphi$ from $S_0 S_{2, 1^{m-2}}$ to $S_{1, 2^{m-1}}$. Indeed, labeling the vertex $S_0$ as 0, let $\varphi(i)=i$ for $1\leq i \leq m-1$, $\varphi(0)=m$, $\varphi(m)=m+1$, $\varphi(m+1)=m+2$.
\end{proof}

Recall the following consequence of the AM-GM inequality (see for instance Theorem 2.1 of \cite{sidtrees}). Let $\mathbf{v}=(H_1, \ldots, H_s)$ where $H_i$'s are partially labelled graphs, and let $\mathbf{a}_1, \ldots, \mathbf{a}_k\in \NN^s$, $\alpha_1, \ldots, \alpha_k\in \RR_{> 0}$ and $\mathbf{b}=\sum_{i=1}^{k} \alpha_i \mathbf{a}_i$ such that $\mathbf{b}\in \NN^s$. Then $$[[\mathbf{v}^{\mathbf{a}_1}]]^{\alpha_1} \cdots [[\mathbf{v}^{\mathbf{a}_k}]]^{\alpha_k} \geq [[\mathbf{v}^{\mathbf{b}}]]$$ is a valid inequality where $\mathbf{v}^{\mathbf{a}_i}=H_1^{a_{i1}}\cdots H_s^{a_{is}}$.  We now prove a few inequalities using this observation. 

\begin{theorem}
    We have that $S_{2,1^0}^3 S_{2, 1^3}\geq S_{2, 1^2}^3$.
\end{theorem}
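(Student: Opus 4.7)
Writing $\sigma(v)=\sum_{u\sim v} d(u)$, the identity $\hom(S_{2,1^k};G)=\sum_v d(v)^k\sigma(v)$ recasts the target as
\[
\Bigl(\sum_v d(v)^2\Bigr)^3 \sum_v d(v)^3\sigma(v) \;\geq\; \Bigl(\sum_v d(v)^2\sigma(v)\Bigr)^3,
\]
equivalently $\HDE(P_2^3\cdot S_{2,1^3};\,S_{2,1^2})\geq 3$. Since $P_2^3\cdot S_{2,1^3}$ is a forest (hence chordal) and $S_{2,1^2}$ is a tree (hence series-parallel), Theorem~\ref{thm:kr11} will apply.

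The first sanity check I would perform is that the AM-GM inequality recalled above cannot close the argument on its own. Applied to $\mathbf{v}=(E,L)$ (with $E$ a pendant edge at a labeled vertex and $L$ a two-edge path at a labeled vertex) with exponent vectors $a_1=(0,1),\, a_2=(3,1)$ and Hölder weights $(1/3,2/3)$, it yields only the neighboring inequality $S_{2,1^0}\cdot S_{2,1^3}^2\geq S_{2,1^2}^3$. Chaining from this to the target would require $S_{2,1^0}^2\geq S_{2,1^3}$, which fails on graphs with a single very high-degree vertex: for the spider whose center $c$ is adjacent to $N$ length-two arms $c{-}v_i{-}w_i$, one computes $(\sum d^2)^2 = (N^2+5N)^2$ versus $\sum d^3\sigma = 2N^4 + 8N^2 + 10N$, and the latter dominates for large $N$. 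The symmetric AM-GM with weights $(3,1)$ produces instead $S_{2,1^0}^3\cdot S_{2,1^3}\geq S_{2,1^9}$, but $S_{2,1^9}\not\geq S_{2,1^2}^3$ already on a single edge (both $S_{2,1^k}(K_2)=2$ for every $k$, while $S_{2,1^2}(K_2)^3=8$), so this route does not close either.

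The plan is therefore to apply Theorem~\ref{thm:kr11} directly, with $F_1=P_2^3\cdot S_{2,1^3}$ and $F_2=S_{2,1^2}$. Since $F_1$ is triangle-free, its maximal cliques are its eleven edges, and the intersection $\cap S$ for $S\subseteq E(F_1)$ is either $V(F_1)$, a single shared vertex, or $\emptyset$, so the Kopparty--Rossman inclusion-exclusion simplifies considerably. I would exhibit an explicit polymatroidal $p\in\mathcal{P}(S_{2,1^2})$—most naturally a weighted vertex measure $p(A)=\sum_{v\in A} w_v$ with weights constant on each $\mathrm{Aut}(S_{2,1^2})$-orbit (the center, the two short leaves, the long-arm vertex, its pendant) and normalized so $p(V(F_2))=1$—and verify
\[
\max_{\varphi\in\Hom(F_1,F_2)} \sum_{S\subseteq E(F_1)} -(-1)^{|S|}\,p(\varphi(\cap S)) \;\geq\; 3.
\]
The hard part will be balancing the four orbital weights so that every $\varphi$-type contributes at least $3$. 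Symmetries of $F_1$ (three interchangeable $P_2$'s), together with the observation that the degree-$4$ vertex of $S_{2,1^3}$ is essentially forced to map to the degree-$3$ center of $F_2$ (otherwise a branch must collapse, reducing the image), cut the enumeration down to a manageable number of orbit types; simultaneously satisfying all the resulting linear constraints is the delicate step.
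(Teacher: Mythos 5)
Your central claim --- that ``the AM-GM inequality recalled above cannot close the argument on its own'' --- is wrong, and this error drives the rest of the proposal. The paper's own proof \emph{is} a one-line application of that AM-GM lemma. What you have shown is only that AM-GM fails for your particular choice $\mathbf{v}=(E,L)$ in which both partially labeled graphs have a \emph{single} labeled vertex (the center). With that choice, the only unlabelings $[[\mathbf{v}^{\mathbf{a}}]]$ one can form are ``generalized brooms'' with $a_1$ pendant edges and $a_2$ two-edge arms at a common center, and the target inequality indeed does not sit in the convex hull of the corresponding exponent vectors. But nothing forces the label set to be a single vertex. Taking instead $\mathbf{v}=(H_1,H_2)$ where $H_1$ is a two-edge path with all three vertices labeled $\{1,2,4\}$ and $H_2$ is a three-edge graph on $\{1,2,3,u\}$ with edges $12,13,2u$ and only $u$ unlabeled, one checks $[[H_1]]=S_{2,1^0}$, $[[H_2^3]]=S_{2,1^3}$, and $[[H_1H_2]]=S_{2,1^2}$; the AM-GM lemma with $\mathbf{a}_1=(1,0)$, $\mathbf{a}_2=(0,3)$, $\alpha_1=1$, $\alpha_2=\tfrac13$ then gives $S_{2,1^0}\cdot S_{2,1^3}^{1/3}\geq S_{2,1^2}$, which is exactly the statement after cubing. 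So the ``sanity check'' should have been a search over richer label sets, not a conclusion that AM-GM is insufficient.

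The Kopparty--Rossman route you sketch as a replacement also has a gap in direction. You want $\HDE(P_2^3\,S_{2,1^3};S_{2,1^2})\geq 3$, and $\HDE=\min_p\max_\varphi(\cdots)$; a lower bound on a minimum over $p$ cannot be obtained by \emph{exhibiting a single} $p$ and checking $\max_\varphi\geq 3$ there --- that establishes nothing about the minimizing $p$. To prove $\HDE\geq 3$ via Theorem~\ref{thm:kr11} one must, as the paper does in its $\HDE(S_0S_{2,1^2};S_{2,1^0})$ argument, produce a homomorphism $\varphi$ and use the polymatroid axioms to show the associated sum is $\geq 3$ for \emph{every} $p\in\mathcal{P}(S_{2,1^2})$ (constructing a single $p$ would instead give the upper bound $\HDE\leq 3$). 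Beyond this, the plan is never carried out: you identify the delicate step (balancing the orbital weights, enumerating $\varphi$-types) but do not do it, so even setting aside the direction issue there is no proof here.

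Two smaller points. The ``symmetric AM-GM with weights $(3,1)$'' does not produce $S_{2,1^0}^3\cdot S_{2,1^3}\geq S_{2,1^9}$: with your $\mathbf{v}=(E,L)$ the right-hand side is $[[E^3L^4]]=\sum_v d(v)^3\sigma(v)^4$, a broom with three pendants and four two-edge arms, which is not $S_{2,1^9}$. And your spider computation of $(\sum d^2)^2$ versus $\sum d^3\sigma$ is correct and does show $S_{2,1^0}^2\geq S_{2,1^3}$ fails --- but since that chaining step was never needed, it does not affect the truth of the theorem.
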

\begin{proof}
Applying AM-GM, we have that $$\left[\left[\Htwo{2}{1}{4}\right]\right]\left[\left[\left(\pthree{}{1}{2}{3}\right)^3\right]\right]^{\frac{1}{3}} \geq \left[\left[\Htwo{2}{1}{4} \pthree{}{1}{2}{3} \right]\right]$$ is a valid inequality, which proves the inequality above.
\end{proof}

\begin{theorem}
    We have that $S_{2,1^0} S_{2, 1^{m-2}}^2\geq  S_{2, 1^{m-1}}^2$.
\end{theorem}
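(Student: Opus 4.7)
The plan is to combine an application of AM-GM on a suitable pair of partially labeled graphs with an elementary sum-of-squares bound. Let $F_1$ be the single edge with one endpoint labeled $1$, and let $F_2$ be $S_{2,1^{m-2}}$ with its center (vertex $1$ in the paper's indexing, the unique vertex of degree $m-1$) carrying label $1$. Identifying the two label-$1$ vertices in the labeled product attaches one extra leaf to the center of $F_2$, so $F_1\cdot F_2$ unlabels to $S_{2,1^{m-1}}$, and hence $\left[\left[F_1 F_2\right]\right]=S_{2,1^{m-1}}$.

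Applying the AM-GM inequality with $\mathbf{a}_1=(2,0)$, $\mathbf{a}_2=(0,2)$, $\alpha_1=\alpha_2=\tfrac{1}{2}$, and $\mathbf{b}=(1,1)\in\NN^2$ yields
$$\left[\left[F_1^2\right]\right]^{1/2}\cdot\left[\left[F_2^2\right]\right]^{1/2}\;\geq\;\left[\left[F_1 F_2\right]\right],$$
which, after squaring, reads $\left[\left[F_1^2\right]\right]\cdot\left[\left[F_2^2\right]\right]\geq S_{2,1^{m-1}}^2$. Two copies of $F_1$ glued at their shared labeled endpoint form $P_2$, so $\left[\left[F_1^2\right]\right]=S_{2,1^0}$. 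Two copies of $F_2$ glued at their centers form a graph $K$ whose homomorphism count is $\sum_v a_v^2$, where $a_v$ denotes the number of homomorphisms from $F_2$ to $G$ that send label $1$ to $v$.

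The final step is the bound $\sum_v a_v^2 \leq \left(\sum_v a_v\right)^2 = \left[\left[F_2\right]\right]^2 = S_{2,1^{m-2}}^2$, which follows from the expansion $\left(\sum_v a_v\right)^2 = \sum_v a_v^2 + 2\sum_{u<v} a_u a_v$ together with $a_v\geq 0$ for all $v$. Chaining these inequalities gives $S_{2,1^{m-1}}^2 \leq S_{2,1^0}\cdot S_{2,1^{m-2}}^2$, which is the claim. I expect the only mildly subtle point to be this last step: it is not itself an instance of AM-GM (AM-GM produces the \emph{glued} count $\left[\left[F_2^2\right]\right]$, not the disjoint square $\left[\left[F_2\right]\right]^2$), but the sum-of-squares bound is precisely what bridges the bowtie-like graph $K$ back to two disjoint copies of $S_{2,1^{m-2}}$, so that the right-hand side can be identified with $S_{2,1^{m-2}}^2$.
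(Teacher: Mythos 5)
Your proof is correct and proceeds from essentially the same labeled graphs as the paper, but you split the argument into two steps where the paper uses one. After setting up $F_1$ (a labeled edge) and $F_2$ ($S_{2,1^{m-2}}$ labeled at its center) exactly as the paper does, you invoke the $\alpha_1=\alpha_2=\tfrac12$ case of AM-GM (Cauchy--Schwarz) to get $\left[\left[F_1^2\right]\right]^{1/2}\left[\left[F_2^2\right]\right]^{1/2}\geq\left[\left[F_1F_2\right]\right]$, and then you need the additional observation $\left[\left[F_2^2\right]\right]=\sum_v a_v^2\leq\big(\sum_v a_v\big)^2=\left[\left[F_2\right]\right]^2$ to replace the bowtie count by $S_{2,1^{m-2}}^2$. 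The paper's version of the AM-GM lemma does not require the exponents to sum to $1$; taking $\mathbf{a}_1=(2,0)$, $\mathbf{a}_2=(0,1)$, $\alpha_1=\tfrac12$, $\alpha_2=1$ (so $\mathbf{b}=(1,1)$) gives $\left[\left[F_1^2\right]\right]^{1/2}\left[\left[F_2\right]\right]\geq\left[\left[F_1F_2\right]\right]$ in a single stroke, so after squaring the claim follows with no separate sum-of-squares step. In effect, the paper's choice of exponents bakes your final bound into the AM-GM application itself; your two-step route is a valid and perhaps more transparent decomposition of the same inequality, but the step you flag as ``not itself an instance of AM-GM'' can actually be absorbed into it.
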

\begin{proof}
Applying AM-GM, we have that 
$$\left[\left[\left(\vedge{1}{}\right)^2\right]\right]^{\frac{1}{2}} \left[\left[ \fivebroom{1}{2}{3}{\vdots}{}{m-1}{m}{m+1}\right]\right] \geq \left[\left[ \vedge{1}{} \fivebroom{1}{2}{3}{\vdots}{}{m-1}{m}{m+1} \right]\right]$$ 
is a valid inequality, which proves the inequality above.
\end{proof}

\begin{theorem}\label{thm:onetwoone}
    We have that $S_{2, 1^{i-2}} S_{2, 1^i}\geq S_{2, 1^{i-1}}^2$ for $i \geq 2$.
\end{theorem}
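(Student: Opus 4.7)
The plan is to apply the AM-GM template for partially labeled graphs (Theorem 2.1 of \cite{sidtrees}) in exactly the same style as the two previous theorems in this section. To do this, I would first identify two convenient partially labeled graph ``building blocks'': let $F_0$ denote the path on three vertices with one endpoint labeled $1$ (a ``hook'' rooted at $1$), and let $E$ denote a single edge with one endpoint labeled $1$. Because gluing is done along matching labels, the product $F_0 \cdot E^k$ is a partially labeled graph in which vertex $1$ is incident to $k+1$ edges, one of which continues into the unlabeled length-two tail of $F_0$. Unlabeling then yields exactly $S_{2,1^k}$, so $[[F_0 \cdot E^k]] = \hom(S_{2,1^k};G)$ for every $k \geq 0$.

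Next I would apply AM-GM to the vector $\mathbf{v} = (F_0, E)$ with exponent vectors $\mathbf{a}_1 = (1, i-2)$ and $\mathbf{a}_2 = (1, i)$ and weights $\alpha_1 = \alpha_2 = \tfrac{1}{2}$. The consistency condition $\tfrac{1}{2}(1, i-2) + \tfrac{1}{2}(1, i) = (1, i-1) \in \mathbb{N}^2$ is clearly satisfied for $i \geq 2$, so the AM-GM inequality gives
$$[[F_0 \cdot E^{i-2}]]^{1/2} \cdot [[F_0 \cdot E^{i}]]^{1/2} \geq [[F_0 \cdot E^{i-1}]],$$
which by the identification above reads $S_{2,1^{i-2}}^{1/2} \cdot S_{2,1^{i}}^{1/2} \geq S_{2,1^{i-1}}$. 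Squaring both sides yields the claimed inequality.

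As a sanity check, the same inequality can be read directly as a Cauchy--Schwarz statement: writing $f(v) := \sum_{u} A_{vu}\, d(u)$ for the number of cherries rooted at $v$, the three quantities unpack as $\hom(S_{2,1^k};G) = \sum_{v} d(v)^{k} f(v)$, and the desired inequality is exactly
$$\sum_{v} d(v)^{i-2} f(v) \cdot \sum_{v} d(v)^{i} f(v) \geq \left( \sum_{v} d(v)^{i-1} f(v) \right)^{\!2},$$
i.e.\ Cauchy--Schwarz applied to the vectors with entries $d(v)^{(i-2)/2} f(v)^{1/2}$ and $d(v)^{i/2} f(v)^{1/2}$. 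There is no real obstacle here; the only ``choice'' in the proof is picking the correct building blocks $F_0$ and $E$ so that all three of $S_{2,1^{i-2}}$, $S_{2,1^{i-1}}$, $S_{2,1^{i}}$ are simultaneously realized as $F_0 \cdot E^{i-2}$, $F_0 \cdot E^{i-1}$, $F_0 \cdot E^{i}$, after which the AM-GM machinery established earlier in the section handles the rest automatically.
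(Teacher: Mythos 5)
Your proof is correct and rests on the same AM-GM lemma from \cite{sidtrees} that the paper uses, but with a different choice of building-block graphs fed into it. The paper picks two partially labeled broom graphs $A$ and $B$ --- roughly, $A$ is a fully labeled copy of $S_{2,1^{i-2}}$, and $B$ is that same broom together with one additional \emph{unlabeled} pendant at the center --- and applies AM-GM with exponent vectors $(2,0)$ and $(0,2)$ and weights $\tfrac{1}{2},\tfrac{1}{2}$, so that $[[A^2]]=S_{2,1^{i-2}}$, $[[B^2]]=S_{2,1^{i}}$, and $[[AB]]=S_{2,1^{i-1}}$. You instead take the atomic pieces $F_0$ (a rooted cherry) and $E$ (a rooted edge) and vary the number of copies of $E$ glued at the root, using exponent vectors $(1,i-2)$ and $(1,i)$; this is an equally valid decomposition and, if anything, makes the identity $[[F_0 E^k]]=\hom(S_{2,1^k};G)$ more transparent. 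Your closing Cauchy--Schwarz computation with the vertex-weighted sums $\sum_v d(v)^k f(v)$ is essentially the same observation the paper makes in its remark immediately after the theorem, that the inequality can also be seen via H\"older applied to powers of degrees.
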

\begin{proof}
Applying AM-GM, we have that $$\left[ \left[ \left( \fourbroom{1}{2}{3}{\vdots}{i-1}{i}{i+1} \right)^2 \right] \right]^{\frac{1}{2}} \left[\left[ \left( \fivebroom{1}{2}{3}{\vdots}{i-1}{}{i}{i+1} \right)^2 \right]\right]^{\frac{1}{2}} \geq \left[\left[ \fourbroom{1}{2}{3}{\vdots}{i-1}{i}{i+1} \fivebroom{1}{2}{3}{\vdots}{i-1}{}{i}{i+1} \right]\right]$$ is a valid inequality, which proves the inequality above.
\end{proof}

Note that the inequality in Theorem~\ref{thm:onetwoone} could also have been seen as an application of H\"older's inequality to a sequence of products of certain powers of degrees of endpoints of edges. 

\begin{theorem}
    We have that $S_{2, 1^{m-2}}^{m}\geq S_{2, 1^{m-1}}^{m-1}$ for $m \geq 3$. Furthermore, when $m=2$, this is a valid inequality in paths.
\end{theorem}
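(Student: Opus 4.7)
For the case $m = 2$, the inequality collapses to $\hom(P_2;G)^2 \geq \hom(P_3;G)$, a classical path inequality that I would cite from the known description of the path profile rather than reprove.

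For $m \geq 3$ my plan is to apply the Kopparty--Rossman theorem (Theorem~\ref{thm:kr11}). Both $S_{2,1^{m-2}}^m$ (a disjoint union of trees, hence chordal) and $S_{2,1^{m-1}}$ (a tree, hence series-parallel) satisfy the hypotheses. The inequality is equivalent to $\HDE(S_{2,1^{m-2}}^m; S_{2,1^{m-1}}) \geq m-1$, and since $\hom$ is multiplicative under disjoint unions, $\HDE$ is additive under taking disjoint copies of the source, so this reduces to the single-source statement $\HDE(S_{2,1^{m-2}}; S_{2,1^{m-1}}) \geq (m-1)/m$.

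Write $T = S_{2,1^{m-2}}$ and $F = S_{2,1^{m-1}}$. I would first specialize Theorem~\ref{thm:kr11} to the tree source $T$: the edge-clique nerve collapses the inclusion--exclusion to $K(p,\varphi) = \sum_{e \in E(T)} p(\varphi(e)) - \sum_{v \in V(T)} (\deg_T(v)-1)\, p(\{\varphi(v)\})$. For injective $\varphi$ the separator-modularity of $p$ further reduces $K(p,\varphi)$ to $p(\varphi(V(T)))$, while for non-injective $\varphi$ the quantity $K(p,\varphi)$ can only be larger, which only helps. I would then produce two families of homomorphisms $T \to F$: for each pure leaf $j \in \{2,\dots,m\}$ of $F$, the injective map $\varphi_j$ with image $V(F)\setminus\{j\}$, and a single non-injective map $\varphi_*$ that folds the pendant of $T$ back onto the center of $F$, yielding image $V(F)\setminus\{m+1,m+2\}$.

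The conclusion will then come from a short pigeonhole argument. Because the central vertex of $F$ separates all of its branches, every $p \in \mathcal{P}(F)$ decomposes into branch masses $\alpha_j = p(\{1,j\}) - p(\{1\})$ for $j \in \{2,\dots,m\}$ and $\beta = p(\{1,m+1,m+2\}) - p(\{1\})$ satisfying $p(\{1\}) + \sum_{j=2}^m \alpha_j + \beta = 1$. A direct calculation gives $K(p,\varphi_j) = 1 - \alpha_j$, and using monotonicity and submodularity at the pure leaf absorbing the fold of $\varphi_*$ one obtains $K(p,\varphi_*) \geq 1 - \beta$. Since $\alpha_2, \dots, \alpha_m, \beta$ are $m$ nonnegative numbers summing to at most $1$, at least one of them is $\leq 1/m$, yielding a $\varphi$ with $K(p,\varphi) \geq (m-1)/m$. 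The step I expect to be trickiest is computing $K(p,\varphi_*)$ for the non-injective map and extracting the clean bound $K(p,\varphi_*) \geq 1-\beta$ directly from the polymatroid axioms without additional slack.
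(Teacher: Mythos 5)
Your proof is correct and takes a genuinely different route from the paper's. For $m\geq 3$ the paper applies the fractional AM--GM/gluing lemma directly: it takes $m-1$ labeled copies of $S_{2,1^{m-2}}$ glued along the star's hub and one more copy with the pendant attached one branch over, with exponents $\tfrac{1}{m-1}$ and $1$, and unlabels to get $S_{2,1^{m-2}}^{m}\geq S_{2,1^{m-1}}^{m-1}$ in essentially one line; the $m=2$ case is the same AM--GM glued along an edge rather than a citation. Your approach instead invokes Theorem~\ref{thm:kr11} and reduces via additivity of $\HDE$ under disjoint unions to showing $\HDE(S_{2,1^{m-2}};S_{2,1^{m-1}})\geq (m-1)/m$; I checked the key steps and they go through cleanly. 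In particular, the step you flagged as trickiest is fine: for the folded map $\varphi_*$, the edge $\{m,m+1\}$ of $T$ and the edge $\{1,m\}$ of $T$ map to the same set $\{1,m\}$ in $F$, so $K(p,\varphi_*)=p(\{1,\dots,m\})+p(\{1,m\})-p(\{m\})\geq p(\{1,\dots,m\})=1-\beta$ directly by monotonicity, no submodularity slack needed. The branch-mass identity $p(\{1\})+\sum_j\alpha_j+\beta=1$ follows by iterated separator modularity at $\{1\}$, and the pigeonhole then gives $(m-1)/m$, which is in fact tight (the star blow-up realizes equality). What the paper's route buys is brevity: the entire inequality is one application of the gluing AM--GM. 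What yours buys is that it sits squarely in the Kopparty--Rossman LP framework that the rest of the section is built on, makes the extremal structure (the branch masses) visible, and simultaneously identifies the tight value of the homomorphism domination exponent rather than just a one-sided bound.
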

\begin{proof}
When $m=2$, applying AM-GM, we have that $$\left[\left[ \Htwo{2}{1}{3} \right]\right] \left[\left[ \Htwo{3}{2}{4}\right]\right] \geq \left[\left[ \Htwo{2}{1}{3} \Htwo{3}{2}{4} \right]\right]$$ is a valid inequality, which yields $\uHtwo \ \uHtwo \geq \upthree$ as desired.

When $m\geq 3$, applying AM-GM, we have that $$ \left[\left[ \left( \fivebroom{3}{4}{5}{\vdots}{m}{m+1}{2}{1} \right)^{m-1}\right]\right]^{\frac{1}{m-1}} \left[\left[ \fivebroom{3}{4}{5}{\vdots}{m}{m+2}{2}{1}  \right]\right] \geq \left[\left[ \fivebroom{3}{4}{5}{\vdots}{m}{m+1}{2}{1} \fivebroom{3}{4}{5}{\vdots}{m}{m+2}{2}{1}  \  \right]\right] $$ is a valid inequality which is equivalent to $$ \left[\left[  \fivebroom{3}{4}{5}{\vdots}{m}{m+1}{2}{1} \right]\right] \left[\left[ \fivebroom{3}{4}{5}{\vdots}{m}{m+2}{2}{1}  \right]\right]^{m-1} \geq \left[\left[ \sixbroom{3}{4}{5}{}{m}{m+1}{m+2}{2}{1} \right]\right]^{m-1}$$ which proves the inequality above. \end{proof}

We now use Theorem~\ref{thm:kr11} to prove the validity of other important binomial inequalities for $\NU$.

\begin{theorem} We have that $\HDE(S_0 S_{2, 1^2}; S_{2, 1^0}) = 2$, and thus that $S_0 S_{2, 1^2} \geq S_{2, 1^0}^2$. \end{theorem}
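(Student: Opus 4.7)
The plan is to apply the Kopparty--Rossman theorem (Theorem~\ref{thm:kr11}) with $F_1 := S_0 \sqcup S_{2,1^2}$ (a forest, hence chordal) and $F_2 := S_{2,1^0}$ (a path of length two, hence series-parallel). Label the vertices of $S_{2,1^2}$ as $a$ (the degree-three center), $b,c$ (leaves of $a$), $d$ (the third neighbor of $a$), and $e$ (the pendant of $d$), and let $f$ denote the vertex of $S_0$. The maximal cliques of $F_1$ are the four edges $\{a,b\},\{a,c\},\{a,d\},\{d,e\}$ together with $\{f\}$. Expanding the inclusion--exclusion sum, the only nonempty intersections beyond the singletons are the three pairs and the one triple of edges meeting at $a$ (all giving $\{a\}$) and the one pair of edges meeting at $d$ (giving $\{d\}$); after the signs collapse, the Kopparty--Rossman sum becomes
\[
p(\varphi(\{a,b\})) + p(\varphi(\{a,c\})) + p(\varphi(\{a,d\})) + p(\varphi(\{d,e\})) + p(\varphi(\{f\})) - 2\,p(\varphi(\{a\})) - p(\varphi(\{d\})).
\]

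For the upper bound $\HDE \leq 2$, I would exhibit a symmetric polymatroidal function $p^* \in \mathcal{P}(F_2)$. Writing $V(F_2) = \{v_1,v_2,v_3\}$ with $v_2$ the middle vertex, take $p^*(\emptyset) = 0$, $p^*(\{v_2\}) = 0$, $p^*(\{v_1\}) = p^*(\{v_3\}) = \tfrac12$, $p^*(\{v_1,v_2\}) = p^*(\{v_2,v_3\}) = \tfrac12$, $p^*(\{v_1,v_3\}) = 1$, and $p^*(V) = 1$. A routine check verifies the $\mathcal{P}(F_2)$ axioms; in particular, the separating-modularity equality at $A = \{v_1,v_2\}, B = \{v_2,v_3\}$ reads $0 + 1 = \tfrac12 + \tfrac12$. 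Enumerating $\Hom(F_1;F_2)$ by $\varphi(a)$: either $\varphi(a) \in \{v_1,v_3\}$, forcing $\varphi(b) = \varphi(c) = \varphi(d) = v_2$ and giving sum at most $\tfrac32$; or $\varphi(a) = v_2$, forcing $\varphi(b),\varphi(c),\varphi(d) \in \{v_1,v_3\}$ and $\varphi(e) = v_2$, giving sum equal to $\tfrac32 + p^*(\varphi(\{f\})) \leq 2$. Hence $\min_p \max_\varphi \leq 2$, so by Kopparty--Rossman $\HDE \leq 2$.

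For the matching lower bound $\HDE \geq 2$, which is equivalent to the inequality $S_0 \cdot S_{2,1^2} \geq S_{2,1^0}^2$, I would argue directly via two applications of Cauchy--Schwarz. Translating to degree sums, the target is $n \sum_u d(u)^2 \sum_{z \sim u} d(z) \geq \bigl(\sum_u d(u)^2\bigr)^2$. The weighted Cauchy--Schwarz inequality
\[
\bigl(\textstyle\sum_u d(u)^2\bigr)^2 \leq \bigl(\textstyle\sum_u d(u)^2 / \sum_{z \sim u} d(z)\bigr) \bigl(\textstyle\sum_u d(u)^2 \sum_{z \sim u} d(z)\bigr)
\]
reduces the task to $\sum_u d(u)^2 / \sum_{z \sim u} d(z) \leq n$. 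A second Cauchy--Schwarz on each neighborhood, $d(u)^2 = \bigl(\sum_{z\sim u} 1\bigr)^2 \leq \bigl(\sum_{z \sim u} d(z)\bigr)\bigl(\sum_{z \sim u} 1/d(z)\bigr)$, gives $d(u)^2/\sum_{z \sim u} d(z) \leq \sum_{z \sim u} 1/d(z)$; summing over $u$ and swapping the order of summation yields $\sum_z d(z)/d(z) \leq n$, as required.

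The main obstacle is recognizing the ``right'' symmetric $p^*$ (with $p^*(\{v_2\}) = 0$ exploiting that $v_2$ separates the two ends of $F_2$) and handling the inclusion--exclusion bookkeeping for the overlapping cliques at $a$ and $d$; once $p^*$ is in place, verifying the polymatroidal axioms and the enumeration of $\Hom(F_1;F_2)$ is mechanical, as is the Cauchy--Schwarz argument for the lower bound.
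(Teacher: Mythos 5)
Your proposal is correct, and it diverges from the paper in an interesting way on the lower bound. For the upper bound $\HDE \leq 2$, both you and the paper invoke Theorem~\ref{thm:kr11}; you use a non-trivial symmetric $p^*$ with $p^*(\{v_2\})=0$ (exploiting the cut-vertex structure of $S_{2,1^0}$), which forces a case split on $\varphi(a)$, whereas the paper takes the constant function $p\equiv 1$ on nonempty sets, under which the Kopparty--Rossman sum collapses by inclusion--exclusion to $1+4-3=2$ for \emph{every} homomorphism $\varphi$ with no case analysis needed. Both work, but the paper's $p$ is a shorter route to the same bound. For the lower bound $\HDE \geq 2$, the paper stays inside the Kopparty--Rossman duality: it exhibits one clever homomorphism $\varphi$ (sending the center of $S_{2,1^2}$ to the middle vertex of $S_{2,1^0}$, and $S_0$ together with the pendant $e$ to the same end) and uses the separating-modularity equality $p(\{2\})+p(V)=p(\{1,2\})+p(\{2,3\})$ to show the sum is exactly $2$ for every $p\in\mathcal P(S_{2,1^0})$. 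You instead bypass KR entirely and prove the equivalent binomial inequality $n\sum_u d(u)^2\sum_{z\sim u}d(z)\geq\bigl(\sum_u d(u)^2\bigr)^2$ directly by two applications of Cauchy--Schwarz (one weighted, one on neighborhoods, followed by a double-counting swap); this is more elementary and self-contained, and---since only the lower bound/inequality is what Theorem~\ref{thm:trop} actually uses---your argument would suffice on its own even without the upper bound. The trade-off is that the paper's single-framework proof gives a uniform recipe that scales to other $\HDE$ computations, while your Cauchy--Schwarz argument is ad hoc but requires no polymatroid machinery for the direction that matters.
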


\begin{proof}
    We first show that $\HDE(S_0 S_{2, 1^2}; S_{2, 1^0}) \leq 2$.  
    For every $\emptyset \neq S \subseteq V(S_{2,1^0})$, let $p(S) = 1$, and let $p(\emptyset)=0$. One can check that $p \in \cP(S_{2, 1^0})$.

    For any homomorphism $\varphi$ from $S_0 S_{2, 1^2}$ to $S_{2, 1^0}$ (including the optimal one), since the maximal cliques in $S_0 S_{2, 1^2}$ are the vertex in $S_0$ and the edges in $S_{2,1^2}$, we have that 
$$\sum_{S \subseteq \MC(S_0 S_{2, 1^2})} - (-1)^{|S|} p(\varphi(\cap S)) = 1 + 4 - 3 = 2.$$
Indeed, the vertex $S_0$ contributes $1$, the four edges of $S_{2,1^2}$ each contributes $1$, the vertex of degree two in $S_{2,1^2}$ contributes $-1$ as it is the intersection of two maximal cliques (edges), and the vertex of degree three in $S_{2,1^2}$ contributes $-2$ since it can be written in three different ways as the intersection of two maximal cliques, and in one way as the intersection of three maximal cliques. As we are minimizing over all $p\in \mathcal{P}(S_{2,1^0})$, this yields that $\HDE(S_0 S_{2, 1^2}; S_{2, 1^0}) \leq 2$ as desired.

We next show that $\HDE(S_0 S_{2, 1^2}; S_{2, 1^0}) \geq 2$. Consider the homomorphism $\varphi$ from $S_0 S_{2, 1^2}$ to $S_{2, 1^0}$ such that  $\varphi(0) = \varphi(4)= 3$, $\varphi(1) = \varphi(5) = 2$, and $\varphi(2) = \varphi(3) = 1$ (see Figure~\ref{fig:vert_sk2_12_p2_proof}). 

\begin{figure}\label{fig:vert_sk2_12_p2_proof}
    \centering
\includegraphics[scale=.5]{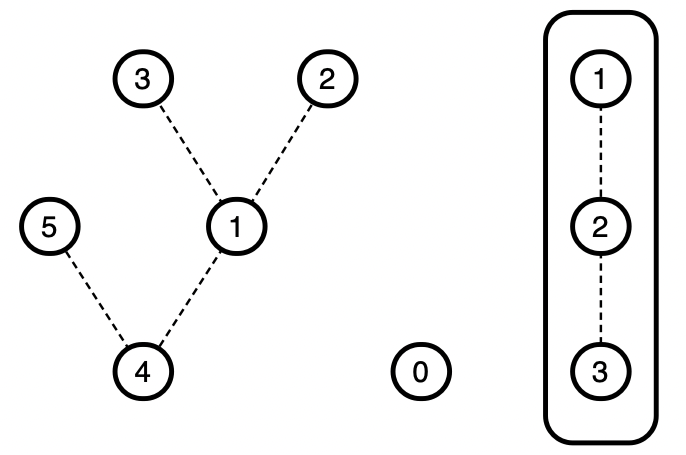}
    \caption{Illustration of $\varphi$.}
    \label{fig:vert_sk2_12_p2_proof}
\end{figure}
    
Every edge of $S_{2, 1^0}$ is covered by the image of an edge of $S_{2, 1^2}$ (each of which is a maximal clique thus contributing a weight of 1) twice. The inner vertex of $S_{2, 1^0}$ is covered by the image of the center vertex of $S_{2, 1^2}$, which yields a multiplier $-2$ (because it can be written as the intersection of three maximal cliques, and as three different intersections of two maximal cliques). One end vertex of $S_{2, 1^0}$ is not covered by the image of any inner vertices and leaves of $S_{2,1^2}$ contribute weights of 0 given that they cannot be written as intersections of maximal cliques. The other end vertex of $S_{2, 1^0}$ is covered by $S_0$ (which contributes a weight of $1$ since it is a maximal clique) and the inner vertex of the two-edge branch of $S_{2, 1^0}$ (which contributes a weight of $-1$ since it is the intersection of two maximal cliques), yielding a multiplier of 0. Thus we have
\begin{align*}
    \sum_{S \subseteq \MC(S_0 S_{2, 1^2})} &- (-1)^{|S|} p(\varphi(\cap S)) \\
    &= 2p(\{1,2\})+2p(\{2,3\})-2p(\{2\})\\
    &= 2p(\{1,2,3\})\\
    &=2,
\end{align*}
where the penultimate line follows from the last constraint of polymatroidal functions (letting $A=\{1,2\}$ and $B=\{2,3\}$, we have $p(\{2\})+p(\{1,2,3\})=p(\{1,2\})+p(\{2,3\})$ since $\{2\}$ separates $\{1\}$ and $\{3\}$), and the last line follows from the fact that we are considering normalized polymatroidal functions, meaning that $p(V(F_2))=1$.

Therefore for every $p \in \mathcal{P}(S_0 S_{2, 1^2})$, there exists a homomorphism that yields 2, and since we are maximizing over all homomorphisms, we have $\HDE(S_0, S_{2, 1^2}; S_{2, 1^0}) \geq 2$. This proves that $\HDE(S_0 S_{2, 1^2}; S_{2, 1^0}) = 2$.
 \end{proof}

\section{Rays of $\tropNU$}\label{sec:rays}

In this section, we show that different rays are in $\tropNU$. In the next section, we will see that the rays $\mathbf{d}_{j,m}$ are doubly extreme rays and the rays $\mathbf{s}_{j,m}$ and $\mathbf{r}_{j,m,i}$ are the remaining extreme rays of $\tropNU$. 

\begin{definition}
    Fix $m\in \NN$, and let
    \begin{itemize}
       \item $\mathbf{d}_{1,m}:=(1, 0, 0, \ldots, 0)$,
       \item $\mathbf{d}_{2,m}:=\vec{1}=(1,\ldots, 1)$,
       \item $\mathbf{d}_{3,m}:=(1,2,2,3,4,\ldots, m)$,
       \item $\mathbf{d}_{4,m} :=(1, 3, 4, 5, \ldots, m+2)$, and
       \item $\mathbf{d}_{5,m}:=(2, 4, 5, 7, \ldots, 2m+1)$
    \end{itemize}
    be vectors in $\RR^{m+1}$.
\end{definition}

\begin{lemma}\label{lem:constrdoubly}
The rays $\mathbf{d}_{j,m}$ are in $\tropNU$ for each $1 \leq j \leq 5$. 
\end{lemma}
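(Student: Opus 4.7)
For each $j\in\{1,\ldots,5\}$, my plan is to exhibit an explicit graph (or sequence $\{G_n\}$) whose log-homomorphism vector is (after scaling by some $\alpha > 0$) equal to $\mathbf{d}_{j,m}$ in the limit, which by the realizability criterion from the introduction places $\mathbf{d}_{j,m}$ in $\tropNU$. All computations will be driven by the elementary ``broom'' formula
\[
\hom(S_{2,1^k}; G) \;=\; \sum_{v\in V(G)} \deg(v)^k \sum_{u\sim v}\deg(u), \qquad k\geq 0,
\]
together with the fact that $\hom(H; K_2) = 2$ for every connected bipartite graph $H$.

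For $\mathbf{d}_{1,m}$ I would take $G_n$ to be one edge together with $n$ isolated vertices: $\hom(S_0; G_n) = n+2$ diverges while each $\hom(S_{2,1^k}; G_n) = 2$, so $\frac{1}{\log(n+2)}(\log \hom(H_i; G_n))_i \to \mathbf{d}_{1,m}$. For $\mathbf{d}_{2,m} = \vec{1}$, the single edge $G = K_2$ suffices, giving log-vector $\log 2 \cdot \vec{1}$. For $\mathbf{d}_{3,m}$ I would take $G_n = K_{2,n}$, where the broom formula yields $\hom(S_{2,1^k}; K_{2,n}) = 4 n^{k+1} + 2^{k+1} n^2$; after dividing by $\log n$ this produces the ``flat'' initial coordinates $y_1 = y_2 = 2$ of $\mathbf{d}_{3,m}$ followed by linear growth. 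For $\mathbf{d}_{4,m}$ I would take $G_n = K_n$ and compute $\hom(S_{2,1^k}; K_n) = n(n-1)^{k+2}$, which after scaling by $\log n$ gives coordinates $1, 3, 4, \ldots, m+2$.

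The delicate ray is $\mathbf{d}_{5,m} = (2, 4, 5, 7, 9, \ldots, 2m+1)$: its jump pattern $2, 1, 2, 2, \ldots$ in consecutive differences forces a two-scale degree distribution. I propose to take $G_n := K_1 \vee n K_n$, a single apex vertex joined to the disjoint union of $n$ copies of $K_n$. Then $|V(G_n)| = n^2 + 1$, the apex has degree $n^2$ and every other vertex has degree $n$, and the broom formula splits into an apex contribution plus an inner contribution,
\[
\hom(S_{2,1^k}; G_n) \;=\; n^{2k+3} \;+\; 2 n^{k+4} \;+\; O(n^{k+3}),
\]
so that the apex term dominates when $k \geq 2$, both terms tie at $k = 1$ (giving $\hom(P_3; G_n) \asymp 3 n^5$), and $\hom(P_2; G_n) = \sum_v \deg(v)^2 \asymp 2 n^4$ is obtained directly. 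Normalizing by $\log n$ recovers $\mathbf{d}_{5,m}$ exactly. The main obstacle is precisely this guess: the bipartite and complete constructions used for $\mathbf{d}_{3,m}$ and $\mathbf{d}_{4,m}$ yield rays whose consecutive coordinate differences are constant, so a qualitatively different family is needed to reproduce the single anomalous ``short'' jump; once $G_n$ is chosen, every remaining verification is an elementary degree-sum calculation.
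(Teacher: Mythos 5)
Your proof is correct and follows the same strategy as the paper: realize each ray $\mathbf{d}_{j,m}$ as the limit of a scaled log-homomorphism vector for an explicit sequence of graphs, using the degree-sum formula $\hom(S_{2,1^k};G)=\sum_v \deg(v)^k\sum_{u\sim v}\deg(u)$. Your choices of realizing graphs differ in three of the five cases — you use $K_{2,n}$ where the paper uses the star $S_n$, $K_n$ where the paper uses $K_{n,n}$, and $K_1\vee nK_n$ (apex over $n$ disjoint $K_n$'s) where the paper uses the blow-up of the two-edge path — but all of your computations check out, and your construction for the delicate ray $\mathbf{d}_{5,m}$, with its two-scale degree distribution (one vertex of degree $n^2$, $n^2$ vertices of degree $n$), is a perfectly valid alternative to the paper's tripartite blow-up, which achieves the same two-scale effect with part sizes $1,n^2,n^2$ and degrees $n^2,n+1,n$.
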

\begin{proof}
We show how to realize each ray as $\alpha(\log \hom(S_0;G), \log \hom(S_{2,1^0};G), \ldots, \log \hom(S_{2,1^{m-1}};G))$ for some graph $G$ or as $\alpha(\log \hom(S_0;G_n), \log \hom(S_{2,1^0};G_n), \ldots, \log \hom(S_{2,1^{m-1}};G_n))$ for some sequence of graphs $G_n$ on $n$ vertices as $n\rightarrow \infty$ for some constant $\alpha\in \RR_{\geq 0}$.
\begin{enumerate}
\item Consider the graph $G_n$ consisting of $n-2$ isolated vertices and one edge. Then $\hom(S_0;G_n)=n$ and $\hom(S_{2,1^k};G_n)=2$ for $0\leq k \leq m-1$. As $n\rightarrow \infty$, we have $\frac{\log (\hom(S_0;G_n))}{\log (n)}=1$ and $\frac{\log(\hom(S_{2,1^k};G_n))}{\log(n)}\rightarrow 0$ for $0\leq k\leq m-1$, thus showing that $\mathbf{d}_{1,m}\in \tropNU$. 

\item To realize $\mathbf{d}_{2,m}=\vec{1}$, consider the graph $G$ consisting of a single edge. Note that $\hom(S_0; G)= \hom(S_{2,1^k}; G)  = 2$ for $0 \leq k \leq m-1$. Thus $\frac{\log(\hom(S_0; G))}{\log (2)} = \frac{\log(\hom(S_{2,1^k}; G))}{\log (2)} = 1$ for $0 \leq k \leq m-1$, and so $\mathbf{d}_{2,m}\in \tropNU$ as desired. 

\item To realize $\mathbf{d}_{3,m} = (1, 2, 2, 3, 4, \ldots, m)$, consider $S_n$, the star with $n$ branches. Then $\hom(S_0;S_n)=n+1$ and $\hom(S_{2,1^k};S_n)=n^{k+1}+n^2$ for $0\leq k \leq m-1$ (where the first term comes from sending the center of $S_{2,1^k}$ to the center of $S_n$, and the second term comes from sending it to one of the $n$ leaves of $S_n$). As $n\rightarrow \infty$, we have $\frac{\log (\hom(S_0;S_n))}{\log (n)}\rightarrow 1$, $\frac{\log(\hom(S_{2,1^0};S_n))}{\log(n)}\rightarrow 2$ and $\frac{\log(\hom(S_{2,1^k};S_n))}{\log(n)}\rightarrow k+1$ for $1\leq k\leq m-1$, thus showing that $\mathbf{d}_{3,m}\in \tropNU$.

\item To realize $\mathbf{d}_{4,m} =(1, 3, 4, 5, \ldots, m+2)$, consider the complete bipartite graph $K_{n,n}$. Then we have $\hom(S_0;K_{n,n})=2n$ and $\hom(S_{2,1^k};K_{n,n})=2n\cdot n^{k+1}\cdot n$ for $0\leq k \leq m-1$. As $n\rightarrow \infty$, we have $\frac{\log (\hom(S_0;K_{n,n}))}{\log (n)}\rightarrow 1$ and $\frac{\log(\hom(S_{2,1^k};K_{n,n}))}{\log(n)}\rightarrow k+3$ for $0\leq k\leq m-1$, thus showing that $\mathbf{d}_{4,m}\in \tropNU$.

\item To realize $\mathbf{d}_{5,m}=(2, 4, 5, 7, \ldots, 2m+1)$, let $G_{2n^2+1}$ be the tripartite graph with parts of size 1, $n^2$ and $n^2$, and where there are $n^2$ edges between the first and second part, $n^3$ edges between the second and third part, and no edges between the first and third part, i.e., this is the blow-up graph of a path of length two. Furthermore, the degree of every vertex in the second part is $n+1$ and the degree of every vertex in the third part is $n$. Then we have that $\hom(S_0;G_{2n^2+1})=2n^2+1$, $\hom(S_{2,1^0};G_{2n^2+1})=(n^2)^2+n^2\cdot (n+1)^2 + n^2\cdot n^2$, and $\hom(S_{2,1^k};G_{2n^2+1})=(n^2)^{k+1}(n+1)+n^2(n+1)^{k+1}(n^2+n)+n^2n^{k+1}(n+1)$ for $1\leq k \leq m-1$ where the different terms come from sending the center of $S_{2,1^k}$ to the different parts of $G_{2n^2+1}$ (see Figure~\ref{fig:ray-realize} for some examples). As $n\rightarrow \infty$, we have $\frac{\log (\hom(S_0;G_{2n^2+1}))}{\log (n)}\rightarrow 2$, $\frac{\log(\hom(S_{2,1^0};G_{2n^2+1}))}{\log(n)}\rightarrow 4$ and $\frac{\log(\hom(S_{2,1^k};G_{2n^2+1}))}{\log(n)}\rightarrow 2k+3$ for $1\leq k\leq m-1$, thus showing that $\mathbf{d}_{5,m}\in \tropNU$.

\end{enumerate}

\begin{figure}
    \centering
    \includegraphics[scale=.5]{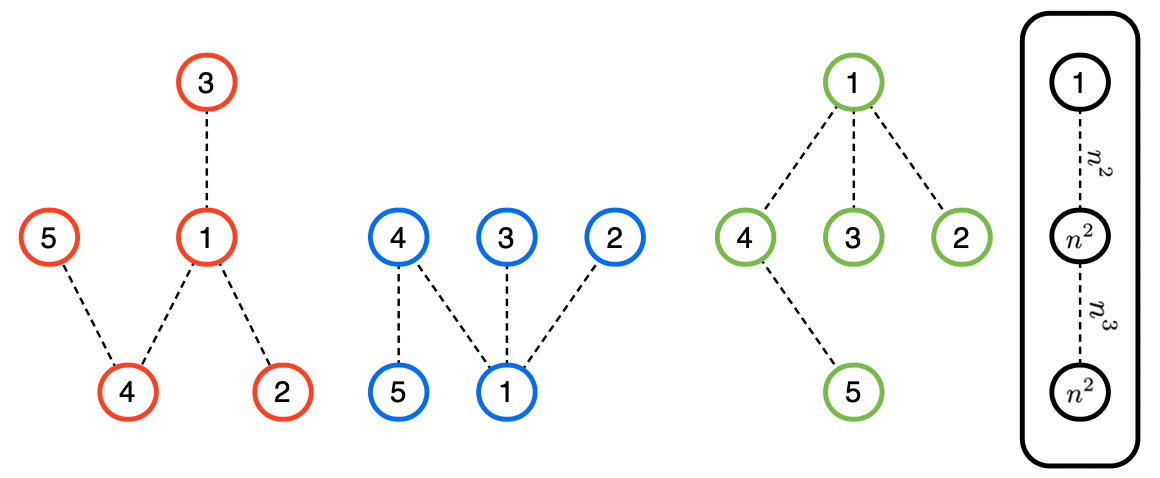}
    \caption{Some possible ways of sending $S_{2,1^2}$ to $G_{2n^2+1}$ for Case 5 of Lemma \ref{lem:constrdoubly}.}
    \label{fig:ray-realize}
\end{figure}
\end{proof}

We now define additional families of rays. In Theorem~\ref{thm:trop}, we will prove that these consist of the remaining extreme rays of $\tropNU$.

\begin{definition}
Fix $m\in \NN$ and let
\begin{itemize}
\item $\mathbf{s}_{1,m}:=(3,6,8,10,12,15,\ldots, 3(m-1), 3m)$, 

\item $\mathbf{s}_{2,m}:=(6,12,16,21,27,33,\ldots, 6m-3,6m+3)$, 

\item $\mathbf{s}_{3,m}:=(2,4,5,6,8,10, \ldots, 2m)$, 

\item $\mathbf{s}_{4,m} := (3,6,8,10,13,16, \ldots, 3m+1)$,

\item $\mathbf{r}_{1,m,i}:=(6i-15, 12i-30, 16i-40, 21i-54, 27i-72, 33i-90, \ldots, 6i^2-15i, 6i^2-9i-15, 6i^2-3i-30, \ldots, (6i-15)m)$ for $5\leq i\leq m-1$,  

\item $\mathbf{r}_{2,m,i}:=(i+2, 3i, 4i, 5i, \ldots, i(i+2), (i+1)(i+2), \ldots, (m-1)(i+2), m(i+2))$ for $5 \leq i \leq m-1$, 

\item $\mathbf{r}_{3,m,i}:=(3i-9,6i-18,8i-24,10i-30,13i-40, 16i-50, \ldots, 3i^2-12i+10, 3i^2-9i, 3i^2-6i-9, \ldots, (3i-9)m)$ for $5 \leq i \leq m-1$, 

\item $\mathbf{r}_{4,m,i}:=(2i-5,4i-10,5i-12,7i-18,9i-24, \ldots, 2i^2-7i+6, 2i^2-5i, 2i^2-3i-5, \ldots, m(2i-5))$ for $5 \leq i \leq m-1$, 

\item $\mathbf{r}_{5,m,i}:=(i, i, \ldots, i, i+1, i+2, \ldots, m)$ for $3 \leq i \leq m-1$, and 

\item $\mathbf{r}_{6,m,i}:=(i+1, 3i-1, 4i-2, 5i-3, \ldots, i^2+1, i^2+i, i^2+2i+1, \ldots, (m-1)(i+1), m(i+1))$ for $4\leq i \leq m-1$.
\end{itemize}
\end{definition}

For now, we only show that all of these rays are also in $\tropNU$.

\begin{lemma}\label{lem:extraysfromdoubly}
The rays $\mathbf{s}_{j,m}$ and $\mathbf{r}_{l,m,i}$ are in $\tropNU$ for $1\leq j \leq 4$ and $1 \leq l\leq 6$. 
\end{lemma}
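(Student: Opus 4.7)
The plan is to prove each listed ray lies in $\tropNU$ by exhibiting it as a tropical sum of conical combinations of the doubly extreme rays $\mathbf{d}_{1,m},\ldots,\mathbf{d}_{5,m}$, which already belong to $\tropNU$ by Lemma~\ref{lem:constrdoubly}. Because $\tropNU$ is a closed, max-closed convex cone by the discussion of Section~\ref{sec:tropresults}, every such combination is again in $\tropNU$, and the lemma follows once each decomposition has been found and verified.

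I would first dispatch the four unparameterized $\mathbf{s}$-rays, each as a two-term tropical sum. For example, one can check
$$\mathbf{s}_{1,m}=3\mathbf{d}_{3,m}\oplus 2\mathbf{d}_{4,m},\qquad \mathbf{s}_{2,m}=3\mathbf{d}_{5,m}\oplus 4\mathbf{d}_{4,m},$$
$$\mathbf{s}_{3,m}=(\mathbf{d}_{2,m}+\mathbf{d}_{4,m})\oplus 2\mathbf{d}_{3,m},\qquad \mathbf{s}_{4,m}=2\mathbf{d}_{4,m}\oplus(\mathbf{d}_{3,m}+\mathbf{d}_{5,m}).$$
The verification for each identity is coordinate by coordinate: one locates the crossover coordinate $k$ at which one summand begins to dominate the other, and then confirms that the coordinate-wise maximum matches the stated entries on either side.

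For the parameterized $\mathbf{r}$-rays the same strategy applies, now with coefficients that depend linearly on $i$. The three cases I expect to admit short two-term descriptions are
$$\mathbf{r}_{2,m,i}=\bigl(i\mathbf{d}_{4,m}+2\mathbf{d}_{1,m}\bigr)\oplus (i+2)\mathbf{d}_{3,m},\quad \mathbf{r}_{5,m,i}=i\mathbf{d}_{2,m}\oplus\mathbf{d}_{3,m},$$
$$\mathbf{r}_{6,m,i}=\bigl((i-1)\mathbf{d}_{4,m}+2\mathbf{d}_{2,m}\bigr)\oplus (i+1)\mathbf{d}_{3,m},$$
and each check splits naturally into three sub-cases according to whether the coordinate $k$ lies below, equal to, or above the transition index $i$ appearing in the definition of the ray.

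The main obstacle will be the remaining three rays $\mathbf{r}_{1,m,i}$, $\mathbf{r}_{3,m,i}$, and $\mathbf{r}_{4,m,i}$, whose coordinates are quadratic in $i$ and whose transition between the low- and high-coordinate regimes occurs near $k=i-1$. Here I would look for decompositions of the form $c(i)\mathbf{d}_{3,m}\oplus D(i)$, where $c(i)\in\{6i-15,\,3i-9,\,2i-5\}$ is chosen so that $c(i)\mathbf{d}_{3,m}$ matches the high-$k$ asymptotic regime visible in the last coordinate, and $D(i)$ is a conical combination of the remaining $\mathbf{d}_{j,m}$'s (and possibly of the $\mathbf{s}_{j,m}$'s already placed in $\tropNU$) with coefficients linear in $i$, tuned to reproduce the low-$k$ entries exactly. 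Finding the correct ansatz is the delicate step, because any piece designed to match the low coordinates must not overshoot at any coordinate $k\geq i-1$; once the pieces are in place, however, verification reduces to a routine but tedious coordinate-by-coordinate comparison split into the ranges $k<i-1$, $k=i-1$, and $k\geq i$, which must hold uniformly for every $i$ in the stated range $5\leq i\leq m-1$.
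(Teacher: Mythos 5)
Your approach is the same as the paper's: realize each ray as a tropical sum of conical combinations of the already-realized rays $\mathbf{d}_{1,m},\dots,\mathbf{d}_{5,m}$, invoking closedness and max-closedness of $\tropNU$. For the seven families you actually write down decompositions for, they are correct. Noteworthy among these is $\mathbf{s}_{3,m}=(\mathbf{d}_{2,m}+\mathbf{d}_{4,m})\oplus 2\mathbf{d}_{3,m}$: this differs from the paper's stated $\mathbf{s}_{3,m}=2\mathbf{d}_{3,m}\oplus\mathbf{d}_{5,m}$, and yours is in fact the one that works. Indeed, $\mathbf{d}_{5,m}$ dominates $2\mathbf{d}_{3,m}$ coordinatewise (at position $k\ge 2$ one compares $2k+1$ with $2k$), so $2\mathbf{d}_{3,m}\oplus\mathbf{d}_{5,m}=\mathbf{d}_{5,m}$, whose third coordinate is $7$ rather than the required $6$ of $\mathbf{s}_{3,m}=(2,4,5,6,8,\dots,2m)$; your decomposition gives $(\max(2,2),\max(4,4),\max(5,4),\max(6,6),\max(7,8),\dots)=(2,4,5,6,8,\dots,2m)$ as needed. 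Your $\mathbf{r}_{2,m,i}$, $\mathbf{r}_{5,m,i}$, $\mathbf{r}_{6,m,i}$ also check out (the extra $2\mathbf{d}_{1,m}$ in $\mathbf{r}_{2,m,i}$ is harmless but superfluous, since $(i+2)\mathbf{d}_{3,m}$ already wins the zeroth coordinate).

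The genuine gap is the remaining three families $\mathbf{r}_{1,m,i}$, $\mathbf{r}_{3,m,i}$, $\mathbf{r}_{4,m,i}$, for which you state an ansatz but produce no decompositions. Moreover, the specific two-term ansatz $c(i)\mathbf{d}_{3,m}\oplus D(i)$ cannot work for $\mathbf{r}_{1,m,i}$ and $\mathbf{r}_{3,m,i}$ if $D(i)$ is taken to be a conical combination of the $\mathbf{d}_{j,m}$ only. The obstruction: each $\mathbf{d}_{j,m}$ has equal successive gaps at positions $2,3,4$ (gaps $0,0$ for $\mathbf{d}_{1,m},\mathbf{d}_{2,m}$; $1,1$ for $\mathbf{d}_{3,m},\mathbf{d}_{4,m}$; $2,2$ for $\mathbf{d}_{5,m}$), so any conical combination has $y_3-y_2=y_4-y_3$; but $\mathbf{r}_{1,m,i}$ has $y_3-y_2=5i-14$ versus $y_4-y_3=6i-18$, and $\mathbf{r}_{3,m,i}$ has $y_3-y_2=2i-6$ versus $y_4-y_3=3i-10$, which are unequal for the relevant $i\ge 5$. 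And since $c(i)\mathbf{d}_{3,m}$ is strictly below the target at positions $2,3,4$ for these rays, $D(i)$ must actually achieve the target there, which is impossible. So a third tropical summand (or an $\mathbf{s}$-term inside $D(i)$, which you mention as a possibility but do not pursue) is genuinely required; the paper in fact uses three-term tropical sums for both, e.g.\ $\mathbf{r}_{1,m,i}=(4i-10)\mathbf{d}_{4,m}\oplus\bigl(6\mathbf{d}_{4,m}+3(i-4)\mathbf{d}_{5,m}\bigr)\oplus(6i-15)\mathbf{d}_{3,m}$. For $\mathbf{r}_{4,m,i}$ your ansatz does work, with $D(i)=2\mathbf{d}_{4,m}+(i-4)\mathbf{d}_{5,m}$, but since you did not exhibit it the proof remains incomplete as written.
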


\begin{proof}
Instead of finding explicit graphs to realize these rays as in the previous proof, we write them by taking tropical sums (i.e., by taking the maximum componentwise) of conical combinations of the rays $\mathbf{d}_{j,m}$ for $1\leq j \leq 5$. Since we have already shown that $\mathbf{d}_{j,m}\in \tropNU$ for $1\leq j \leq 5$ in Lemma~\ref{lem:constrdoubly} and since $\tropNU$ is a max-closed closed convex cone, this shows that these rays are also in $\tropNU$. Note that Section~\ref{sec:tropresults} explains how one could use this to come up with explicit graphs to realize these rays.

\begin{itemize}

    \item $\mathbf{s}_{1,m}=(3,6,8,10,12,15,\ldots, 3(m-1), 3m) = 3\bd_{3,m} \oplus 2\bd_{4,m}$. 

    \item $\mathbf{s}_{2,m}=(6,12,16,21,27,33,\ldots, 6m-3,6m+3) = 3\bd_{5,m} \oplus 4\bd_{4,m}$.

    \item $\mathbf{s}_{3,m}=(2,4,5,6,8,10, \ldots, 2m) = 2 \bd_{3,m} \oplus \bd_{5,m}$.

    \item $\mathbf{s}_{4,m} = (3,6,8,10,13,16, \ldots, 3m+1) = (\bd_{3,m} + \bd_{5,m}) \oplus 2 \bd_{4,m}$.

    \item $\mathbf{r}_{1,m,i}=(6i-15, 12i-30, 16i-40, 21i-54, 27i-72, 33i-90, \ldots, 6i^2-15i, 6i^2-9i-15, 6i^2-3i-30, \ldots, (6i-15)m)=(4i-10)\bd_{4,m} \oplus(6\bd_{4,m}+3(i-4)\bd_{5,m})\oplus (6i-15)\bd_{3,m}$ for $5\leq i\leq m-1$. 

    \item $\mathbf{r}_{2,m,i}=(i+2, 3i, 4i, 5i, \ldots, i(i+2), (i+1)(i+2), \ldots, (m-1)(i+2), m(i+2))= i\bd_{4,m} \oplus (i+2) \bd_{3,m}$ for $5 \leq i \leq m-1$.

    \item $\mathbf{r}_{3,m,i}=(3i-9,6i-18,8i-24,10i-30,13i-40, 16i-50, \ldots, 3i^2-12i+10, 3i^2-9i, 3i^2-6i-9, \ldots, (3i-9)m)= (\bd_{2,m}+(i-5)\bd_{3,m}+\bd_{4,m}+(i-3)\bd_{5,m})\oplus 2(i-3)\bd_{4,m} \oplus (3i - 9) \bd_{3,m}$ for $5 \leq i \leq m-1$. 

    \item $\mathbf{r}_{4,m,i}=(2i-5,4i-10,5i-12,7i-18,9i-24, \ldots, 2i^2-7i+6, 2i^2-5i, 2i^2-3i-5, \ldots, m(2i-5))=(2\bd_{4,m}+(i-4)\bd_{5,m})\oplus (2i-5)\bd_{3,m}$ for $5 \leq i \leq m-1$.

    \item $\mathbf{r}_{5,m,i}=(i, i, \ldots, i, i+1, i+2, \ldots, m)= i\bd_{2,m} \oplus \bd_{3,m}$ for $3 \leq i \leq m-1$. 

    \item $\mathbf{r}_{6,m,i}=(i+1, 3i-1, 4i-2, 5i-3, \ldots, i^2+1, i^2+i, i^2+2i+1, \ldots, (m-1)(i+1), m(i+1))= (2\bd_{2,m}+(i-1)\bd_{4,m})\oplus (i+1) \bd_{3,m}$ for $4\leq i \leq m-1$. 
    \end{itemize}
\end{proof}
\section{Main results}\label{sec:results}

In this section, we prove our main results. 

\maintheorem

\begin{proof}
By taking the $\log$ of the inequalities proven to be valid for $\NU$ in Section \ref{sec:inequalities}, we know that $\tropNU \subseteq Q$. 

To show that $Q \subseteq \tropNU$, we compute the extreme rays of $Q$ and show that they are all present in $\tropNU$. Any extreme ray must be tight with $m$ linearly independent inequalities and respect all other inequalities. Note that there are $m-2$ inequalities of type $y_{i-1}-2y_i+y_{i+1}\geq 0$ (which we call \emph{gap inequalities}) for $2 \leq i \leq m-1$. In particular, observe that we will never discuss the gap between $y_0$ and $y_1$. Moreover, there are seven other inequalities (which we call \emph{non-gap inequalities}) for a total of $m+5$ inequalities. So there could potentially be $\binom{m+5}{m}$ extreme rays. However, many of these intersections are not feasible. To better understand which of these $\binom{m+5}{m}$ intersections we need to consider, it is useful to first look at the projection of $\tropNU$ onto coordinates $y_0, y_1, y_2, y_3, y_4, y_{m-1}, y_m$ (i.e., the coordinates in the seven non-gap inequalities) which we call $C$. Computing $C$ is non-trivial for a general $m$, so we end up giving a larger set $C'$ that contains $C$ (as the defining inequalities of $C'$ are all valid for $C$) and which sufficiently restricts our search of extreme rays.

We first prove a few valid inequalities for $C$ which we use to define $C'$. Observe that $y_i \leq (i+2)y_0$ for all $i\in[m]$ since $S_{2,1^{i-1}}$ has $i+2$ vertices. Moreover, from the gap inequalities, we know the gap $y_{i+1}-y_i$ is greater or equal to the gap $y_i-y_{i-1}$ for $2\leq i \leq m-1$, i.e., gaps are non-decreasing, and so this implies that $y_{m-1}\geq y_3 + (m-4)(y_4-y_3)$, $y_m \geq y_3+(m-3)(y_4-y_3)$, and that $y_3 -y_4 - y_{m-1} + y_m \geq 0$.
Also, the conical combination of gap inequalities $\sum_{i=5}^{m-1} (i-4)(y_{i-1}-2y_i+y_{i+1}\geq 0)$ yields the inequality $y_4-(m-4)y_{m-1}+(m-5)y_m\geq 0$. 
Finally, the conical combination $(-y_1+y_2 \geq 0)+\sum_{i=2}^{j} (y_{i-1}-2y_i+y_{i+1}\geq 0)$ yields $-y_j+y_{j+1}\geq 0$ which implies that $y_1$ through $y_m$ are non-decreasing. 
Therefore, we let $$
C':= \left\{\yy \in \RR^7 \middle\vert \begin{array}{l}
- y_1 + y_2  \geq 0 \\
4y_1 - 3y_2 \geq 0 \\
3y_1 -3y_3 + y_4 \geq 0 \\
y_1 + 2y_{m-1} -2y_m \geq 0 \\
y_0 + y_{m-1} - y_m \geq 0 \\
y_0 - 2y_1 + y_3 \geq 0 \\
m\cdot y_{m-1} - (m-1)\cdot y_m \geq 0 \\
y_1 -2y_2+y_3\geq 0\\
y_2 -2y_3+y_4 \geq 0\\
3y_0-y_1 \geq 0\\
4y_0-y_2\geq 0\\
5y_0-y_3 \geq 0\\
6y_0-y_4 \geq 0\\
(m+1)y_0-y_{m-1} \geq 0\\
(m+2)y_0-y_m \geq 0\\
(m-5)y_3-(m-4)y_4+y_{m-1}\geq 0\\
(m-4)y_3-(m-3)y_4+y_m \geq 0\\
y_3 - y_4 - y_{m-1} + y_m \geq 0 \\
y_4 -(m-4)y_{m-1} + (m-5)y_m \geq 0 \\
-y_2 + y_3 \geq 0 \\
-y_3 + y_4 \geq 0 \\
-y_4 + y_{m-1} \geq 0 \\
-y_{m-1} + y_m \geq 0 
\end{array}\right\}\supseteq C.
$$
We can compute all the faces of $C'$ (for example in Sage or Polymake) to see which of the non-gap inequalities can be tight simultaneously. If the intersection of the hyperplanes corresponding to some of the non-gap inequalities with $C'$ yields $\mathbf{0}$, then we know their intersection with $C$ will also yield $\mathbf{0}$. Moreover, even if a particular set of $k$ of the first seven inequalities (our original non-gap inequalities) does not yield $\mathbf{0}$ in $C'$, we know that extreme rays in $C$ will require $m-k$ of the gap inequalities to be tight. In particular, if $k=4$ (respectively $k=3$ and $k=2$), then only two (respectively one and zero) gap inequalities aren't tight, and so at least one (respectively two and three) of $y_1 -2y_2+y_3\geq 0$, $y_2 -2y_3+y_4 \geq 0$ and $(m-5)y_3-(m-4)y_4+y_{m-1}\geq 0$ must be tight in order for there to potentially be an extreme ray in $C$ where these non-gap inequalities are tight. Taking all of this into consideration, Table~\ref{tab:tightineqs} gives the set of non-gap inequalities that can potentially be tight with extreme rays of $C$. For each set, we find which extreme rays can be recovered.

\begin{table}
\begin{tabular}{|l|c|c|c|c|c|c|c|}
\hline
&
\begin{turn}{90}$- y_1 + y_2  \geq 0$\end{turn}
&
\begin{turn}{90}$4y_1 - 3y_2 \geq 0$\end{turn}
&
\begin{turn}{90}$3y_1 -3y_3 + y_4 \geq 0$\end{turn}
&
\begin{turn}{90}$y_1 + 2y_{m-1} -2y_m \geq 0$\end{turn}
&
\begin{turn}{90}$y_0 + y_{m-1} - y_m \geq 0$\end{turn}
&
\begin{turn}{90}$y_0 - 2y_1 + y_3 \geq 0$\end{turn}
&
\begin{turn}{90}$m\cdot y_{m-1} - (m-1)\cdot y_m \geq 0$ \ \end{turn}\\
\hline
$\mathcal{S}_1$ & * & * & * & * & & &*\\
$\mathcal{S}_2$ & * & & & * & *& * &*\\
$\mathcal{S}_3$ & & * & * & * & * & & *\\
$\mathcal{S}_4$ & & * & * & * & * & &\\
$\mathcal{S}_5$ & & * & * & & * & * &\\
$\mathcal{S}_6$ & & * & * & & * & & *\\
$\mathcal{S}_7$ & & * & & * &* & &*\\
$\mathcal{S}_8$ & & & * &* &* & & *\\
$\mathcal{S}_9$ & & & & * & *& *& *\\
$\mathcal{S}_{10}$ & * & & & & & * & *\\
$\mathcal{S}_{11}$ & & * & & * & *& & \\
$\mathcal{S}_{12}$ & & & * & * & * & & \\
$\mathcal{S}_{13}$ & & & & & * & * & *\\
$\mathcal{S}_{14}$ & * & & & & & * & \\
\hline
\end{tabular}
    \caption{Cases of non-gap inequalities that can be tight simultaneously.}
    \label{tab:tightineqs}
\end{table}

$\mathcal{S}_1$: From the tight inequalities in this set, we have that $y_1=y_2=0$ and $y_{m-1}=y_m$. Since gaps are non-decreasing, this means that all gaps are 0, and so this yields the extreme ray $\mathbf{d}_{1,m}=(1,0,\ldots,0)$ which is tight with all $m-2$ gap inequalities. 

$\mathcal{S}_2$: From the tight inequalities in this set, we have that $y_0=a$, $y_1=y_2=2a$, $y_3=3a$, $y_{m-1}=(m-1)a$, $y_m=ma$ for some $a\geq 0$. Since the gap between $y_2$ and $y_3$ is the same as the gap as between $y_{m-1}$ and $y_m$ and gaps are non-decreasing, this yields the extreme ray $\mathbf{d}_{3,m}=(1,2,2,3,4,5,\ldots, m-1, m)$ which is tight with $m-3$ of the gap inequalities. 

$\mathcal{S}_3$: From the tight inequalities in this set, we have that $y_0=3a$, $y_1=6a$, $y_2=8a$, $y_{m-1}=3(m-1)a$, $y_m=3ma$ for some $a\geq 0$. We know there are at most three gap inequalities that are not additionally tight. Two cases arise: either $y_1-2y_2+y_3\geq 0$ is tight or it is not.

In the first case, we then have $y_3=10a$ which in turn implies that $y_4=12a$ since $3y_1 -3y_3 + y_4 = 0$. Note that the average gap between $y_4$ and $y_m$ has size $\frac{3ma-12a}{m-4}=3a$. Since the gap from $y_{m-1}$ to $y_m$ is $3a$ and gaps are non-decreasing, all gaps between $y_4$ and $y_m$ must have size $3a$. This yields the extreme ray $\mathbf{s}_{1,m}=(3,6,8,10,12,15,\ldots, 3(m-1), 3m)$ which is tight with $m-3$ of the gap inequalities.

In the second case when $y_1-2y_2+y_3> 0$, let $y_3=10a+b$ for some $b>0$. Then, since $3y_1 -3y_3 + y_4 = 0$, $y_4=12a+3b$, and so $y_2-2y_3+y_4>0$ as well. We know that gaps are non-decreasing, so the gap between $y_3$ and $y_4$ is at most the gap between $y_{m-1}$ and $y_m$ which means that $2b\leq a$. Note that we cannot have $2b=a$ since then every gap from $y_3$ to $y_m$ would be $3a$, and so $y_m=10a+b+(m-3)\cdot 3a\neq 3ma$. So $2b<a$ and $y_4-y_3<3a$, and since the gap between $y_{m-1}$ and $y_m$ is $3a$, there is one last gap inequality that isn't tight, say $y_{i-1} -2y_i + y_{i+1}>0$ for some $4\leq i \leq m-1$. So $y_i=10a+b+(i-3)(2a+2b)$ and $y_{i+1}=10a+b+(i-3)(2a+2b)+3a$, and we also know since all gaps thereafter are $3a$ and $y_m=3ma$, $y_{i+1}=(i+1)3a$. Thus, $b=\frac{(i-4)a}{2i-5}$. This yields the family of rays $\mathbf{r}_{1,m,i}=(6i-15, 12i-30, 16i-40, 21i-54, 27i-72, 33i-90, \ldots, 6i^2-15i, 6i^2-9i-15, 6i^2-3i-30, \ldots, (6i-15)m)$ which are tight with $m-5$ of the gap inequalities. For example, for $m=7$, we have $\mathbf{r}_{1,7,5}=(15, 30, 40, 51, 63, 75, 90,105) $ and $\mathbf{r}_{1,7,6}=(21, 42, 56, 72, 90, 108, 126, 147)$.  Finally, observe that we do not include $i=4$ in this family as this would make the inequality $y_1-2y_2+y_3=0$, and bring us back to $\mathbf{s}_{1,m}$.

$\mathcal{S}_4$: From the tight inequalities in this set, we have that $y_0=6a$, $y_1=12a$, $y_2=16a$, $y_3=c$, $y_4=3c-36a$, $y_{m-1}=2b$ and $y_m=6a+2b$ for some $a,b\geq 0$. Furthermore, we know that at most two gap inequalities aren't tight. Again, two cases arise: either $y_1-2y_2+y_3\geq 0$ is tight or it is not. 

In the first case, we get that $c=20a$, which implies that $y_4=24a$. The average gap between $y_4$ and $y_m$ must thus be $\frac{6a+2b-24a}{m-4}=\frac{2b-18a}{m-4}$. However, we know that $my_{m-1}-(m-1)y_m>0$, which is equivalent to $b>3a(m-1)$. Thus, the average gap is bigger than $6a$, which is a contradiction since the largest gap is $6a$. 

In the second case when $y_1-2y_2+y_3>0$, let $y_3=c=20a+d$ for some $d>0$. Then $y_4=24a+3d$. Further, note that $y_2-y_1=4a$, $y_3-y_2=4a+d$, $y_4-y_3=4a+2d$ (which are all distinct), and $y_m-y_{m-1}=6a$ and since at most two gap inequalities aren't tight, we must have that $4a+2d=6a$, and thus that $d=a$. We thus get the ray $\mathbf{s}_{2,m}=(6,12,16,21,27,33,\ldots, 6m-3,6m+3)$ which is tight with $m-4$ of the gap inequalities.

$\mathcal{S}_5$: From the tight inequalities in this set, we have that $y_0=b$, $y_1=3a$, $y_2=4a$, $y_3=6a-b$, $y_4=9a-3b$, $y_{m-1}=c$, and $y_m=b+c$ for some $a,b,c\geq 0$. Since gaps are non-decreasing, we have that $a\leq 2a-b\leq 3a-2b\leq b$ which implies that $a=b$ and that all gaps are $a$. This thus yields the ray $\mathbf{d}_{4,m}:=(1,3,4,5,6, \ldots, m+2)$ which is tight with all gap inequalities.

$\mathcal{S}_6$: From the tight inequalities in this set, we have that $y_0=b$, $y_1=3a$, $y_2=4a$, $y_3=c$, $y_4=3c-9a$, $y_{m-1}=(m-1)b$, $y_m=mb$ for some $a,b,c\geq 0$. Since gaps are non-decreasing, we have that $a \leq c-4a \leq 2c-9a \leq b$, but we know that at most two gap inequalities aren't tight, so not all of these can be distinct.

If $a = c-4a$, then $c=5a$ and $c-4a = 2c-9a$ as well. Suppose there are two gap inequalities that aren't tight, say $y_{i-1}-2y_i+y_{i+1}>0$ and $y_{j-1}-2y_j+y_{j+1}>0$ with $4\leq i < j \leq m-1$ (since we have assumed the inequalities for $i\in \{2,3\}$ are tight), then the $m-i+1$ variables $y_0, y_{i+1}, y_{i+2}, \ldots, y_m$ would be involved in $m-i-2$ tight gap inequalities and the tight inequality $y_0+y_{m-1}-y_m=0$, and therefore, this cannot result in a ray. So we know that in this case, we can assume that there is at most one gap inequality that isn't tight, say $y_{i-1}-2y_i+y_{i+1}>0$ for some $4\leq i\leq m-1$. Then $y_i=(i+2)a=ib$ which implies that $b=\frac{i+2}{i}a$. This yields the family of rays $\mathbf{r}_{2,m,i}=(i+2, 3i, 4i, 5i, \ldots, i(i+2), (i+1)(i+2), \ldots, (m-1)(i+2), m(i+2))$ which are tight with $m-3$ of the gap inequalities. For example, for $m=7$, we have $\mathbf{r}_{2,7,5}=(7,15,20,25,30,35,42,49)$, $\mathbf{r}_{2,7,6}=(8,18,24,30,36,42,48,56)$. Finally, observe that we do not include $i=4$ in this family as this would make the inequality $y_1+2y_{m-1}-2y_m \geq 0$ tight.

Now let's consider the case when $a<c-4a$, say $c=5a+d$, which in turn implies that $c-4a < 2c-9a$, and since at most two gap inequalities aren't tight, we must have that $2c-9a = b=a+2d$. We thus have that $y_m=m(a+2d)=y_3+(m-3)(a+2d)=5a+d+(m-3)(a+2d)$, and so $a=\frac{5d}{2}$. In turn, this gives that $y_1=\frac{15d}{2}$, $y_m=m\frac{9d}{2}$  and $y_{m-1}=(m-1)\frac{9d}{2}$, and therefore, $y_1+2y_{m-1}-2y_m<0$, a contradiction.

$\mathcal{S}_7$: From the tight inequalities in this set, we have that $y_0=3a$, $y_1=6a$, $y_2=8a$, $y_3=b$, $y_4=c$, $y_{m-1}=3(m-1)a$ and $y_m=3ma$ for some $a,b,c\geq 0$. Suppose that $y_1-2y_2+2y_3=0$. In that case, $b=10a$ and since $3y_1-3y_3+y_4>0$, let $c=12a+d$ for some $d>0$. We know there are at most two gap inequalities that aren't tight, and the three gaps present are $2a\leq 2a+d \leq 3a$ which are all distinct since $d>0$ and since if $2a+d=3a$, then $y_m$ would be equal to $10a+(m-3)3a \neq 3ma$. We already know that $y_2-2y_3+y_4>0$. 

Suppose $y_{i-1}-2y_i+y_{i+1}>0$ as well for some $5\leq i\leq m-1$. Then $y_i=3ia=10a+(i-3)(2a+d)$, which implies that $d=\frac{i-4}{i-3}a$. This thus yields the family of rays $\mathbf{r}_{3,m,i}=(3i-9,6i-18,8i-24,10i-30,13i-40, 16i-50, \ldots, 3i^2-12i+10, 3i^2-9i, 3i^2-6i-9, \ldots, (3i-9)m)$ which are tight with $m-4$ gap inequalities. For example, $\mathbf{r}_{3,7,5}=(6,12,16,20,25,30,36,42)$ and $\mathbf{r}_{3,7,6}=(9, 18, 24, 30, 38, 46, 54, 63)$. Finally, observe that we do not include $i=4$ in this family as this would make the inequality $3y_1-3y_3+y_4\geq 0$ tight. 

Now suppose that $y_1-2y_2+2y_3>0$ and let $y_3=b=10a+d$. Since $3y_1-3y_3+y_4>0$, let $y_4=c=12a+3d+f$ for some $f>0$. There are thus four gaps present, namely $2a<2a+d<2a+2d+f\leq 3a$ and since at most two gap inequalities are not tight, we must have $2a+2d+f=3a$, which implies that $y_4=y_m-(m-4)3a=12a\neq 12a+3d+f$, a contradiction.

$\mathcal{S}_8$: From the tight inequalities in this set, we have that $y_0=a$, $y_1=2a$, $y_2=c$, $y_3=b$, $y_4=3b-6a$, $y_{m-1}=(m-1)a$, $y_m=ma$ for some $a,b,c\geq 0$.

Suppose that $y_1-2y_2+y_3=0$ and let $y_2-y_1=d$ for some $d> 0$ (since $y_2-y_1>0$), which implies that $y_2=2a+d$, $y_3=2a+2d$, $y_4=6d$. Since $4y_1-3y_2>0$, we have that $d<\frac{2a}{3}$. Moreover, since gaps are non-decreasing, we know that $y_4-y_3\geq d$, which implies that $d\geq \frac{2a}{3}$, a contradiction.

Now suppose that $y_1-2y_2+y_3>0$, and let $y_2=2a+d$ and $y_3=2a+2d+f$ for some $d,f>0$, which implies that $y_4=6d+3f$. There are thus four gaps present, namely $d<d+f\leq 4d+2f-2a \leq a$, and since at most two gap inequalities are not tight, we need to have either that $4d+2f-2a = a$ or $d+f= 4d+2f-2a$.

If $4d+2f-2a = a$, then $y_m=ma=2a+2d+f+a(m-3)$, which implies that $6a=12d+6f$, contradicting $4d+2f-2a = a$.

If $d+f= 4d+2f-2a$ and the change from gaps of size $d+f=4d+2f-2a$ to gaps of size $a$ happens at position $4 \leq i\leq m-1$, i.e., $y_{i-1}-2y_i+y_{i+1} >0$, then $y_i=(i+2)d+(i-1)f=i\frac{3d+f}{2}$, which implies that $f=\frac{i-4}{i-2}d$. This thus yields the family of rays $\mathbf{r}_{4,m,i}=(2i-5,4i-10,5i-12,7i-18,9i-24, \ldots, 2i^2-7i+6, 2i^2-5i, 2i^2-3i-5, \ldots, m(2i-5))$ which are tight with $m-4$ gap inequalities. For example, $\mathbf{r}_{4,7,5}=(5, 10, 13, 17, 21, 25, 30, 35)$ and $\mathbf{r}_{4,7,6}=(7, 14, 18, 24, 30, 36, 42, 49)$.  Finally, observe that we do not include $i=4$ in this family as this would make the inequality $4y_1-3y_2\geq 0$ tight.

$\mathcal{S}_9$: From the tight inequalities in this set, we have that $y_0=a$, $y_1=2a$, $y_2=b$, $y_3=3a$, $y_4=4a$, $y_{m-1}=(m-1)a$, $y_m=ma$ for some $a,b\geq 0$. Since we know that gaps are non-decreasing, we know that $y_i=ia$ for $3\leq i\leq m$. 

If $y_1-2y_2+y_3=0$, then $y_2=b=\frac{5}{2}a$, and so the only gap inequality that isn't tight is $y_2-2y_3+y_4\geq 0$. This yields the ray $\mathbf{s}_{3,m}=(2,4,5,6,8,10, \ldots, 2m)$.

If $y_2-2y_3+y_4=0$, then $y_2=b=2a$, but then $-y_1+y_2=0$, a contradiction.

Finally, note that we can't have an extreme ray for which both $y_1-2y_2+y_3>0$ and $y_2-2y_3+y_4>0$ since then no tight inequality would involve $y_2$.

$\mathcal{S}_{10}$: From the tight inequalities in this set, we have that $y_0=b$, $y_1=a$, $y_2=a$, $y_3=2a-b$, $y_{m-1}=(m-1)c$, $y_m=mc$ for some $a,b,c\geq 0$. Note that there is at most one gap inequality that isn't tight, and there are three gaps already present: $0\leq a-b \leq c$, so at least two of those must be equal.

If $a-b=0$ and the gap switches from $0$ to $c$ at position $2\leq i \leq m-1$, i.e., $y_{i-1}-2y_i+y_{i+1}>0$, this yields the family of rays $\mathbf{r}_{5,m,i}=(i, i, \ldots, i, i+1, i+2, \ldots, m)$. Finally, observe that we do not include $i=2$ in this family as this would make the inequality $y_1+2y_{m-1}-2y_m\geq 0$ tight.

If $a-b=c$, then $y_2=a=2c$ since each gap after $y_2$ is $c$, but then $y_1+2y_{m-1}-2y_m=0$, a contradiction.

$\mathcal{S}_{11}$: From the tight inequalities in this set, we have that $y_0=\frac{3a}{2}$, $y_1=3a$, $y_2=4a$, $y_3=c$, $y_{m-1}=b-\frac{3a}{2}$, $y_m=b$ for some $a,b\geq 0$. We know that there is at most one gap inequality that isn't tight, say $y_{i-1}-2y_i+y_{i+1}>0$.

If $i\geq 4$, then $y_3=5a$ and $y_4=6a$, which means that $3y_1-3y_3+y_4=0$, a contradiction. 

If $i=3$, this yields the ray $\mathbf{s}_{4,m}=(3,6,8,10,13,16, \ldots, 3m+1)$.

If $i=2$, this implies that $y_3=\frac{11a}{2}$ and $y_4=7a$ which implies that $3y_1-3y_3+y_4<0$, a contradiction.

$\mathcal{S}_{12}$: From the tight inequalities in this set, we have that $y_0=a$, $y_1=2a$, $y_3=c$, $y_4=3c-6a$, $y_{m-1}=b-a$, $y_m=b$ for some $a,b,c\geq 0$. We know that there is at most one gap inequality that isn't tight, so the gap between $y_3$ and $y_4$, $2c-6a$, must be either equal to the gap between $y_{m-1}$ and $y_m$, namely $a$, or to the gap between $y_2$ and $y_3$.

If $y_4-y_3=a$, then $y_4=c+a=3c-6a$, which implies that $a=\frac{2c}{7}$. Moreover, either $y_1-2y_2+y_3=0$ or $y_2-2y_3+y_4=0$. In the first case, this implies that $y_2=\frac{11c}{14}$, but then $4y_1-3y_2<0$, a contradiction. In the second case, this implies that $y_2=\frac{5c}{7}$ and yields the extreme ray $\mathbf{d}_{5,m}=(2,4,5,7,\ldots, 2m-1, 2m+1)$. 

If $y_4-y_3=y_3-y_2$ but $y_4-y_3<a$, then we must have that $y_2-y_1=y_3-y_2$ as well since there is at most one gap inequality that isn't tight. So $y_0=a$, $y_1=2a$, $y_2=2a+d$, $y_3=2a+2d$, $y_4=2a+3d$ for some $d \geq 0$. Since $3y_1-3y_3+y_4=0$, we have that $2a=3d$, but then $4y_1-3y_2=0$, a contradiction.

$\mathcal{S}_{13}$: From the tight inequalities in this set, we have that $y_0=a$, $y_1=b$, $y_3=2b-a$, $y_{m-1}=(m-1)a$ and $y_m=ma$ for some $a,b\geq 0$. We know that at most one gap inequality isn't tight with any extreme ray in this setting, so $y_1-2y_2+y_3=0$ or $y_2-2y_3+y_4=0$.

If $y_1-2y_2+y_3=0$, $y_2=\frac{3b-a}{2}$. We already have two gaps present: $\frac{b-a}{2}<a$ (note that they cannot be equal since $4y_1-3y_2>0$. Suppose that the switch from one gap to the other happens at position $i$, i.e., $y_{i-1}-2y_i+y_{i+1}>0$ for some $3\leq i \leq m-1$. Since $y_i=2ia=(i+1)b-(i-1)a$, we have that $b=\frac{(3i-1)a}{i+1}$ which yields the family of rays $\mathbf{r}_{6,m,i}=(i+1, 3i-1, 4i-2, 5i-3, \ldots, i^2+1, i^2+i, i^2+2i+1, \ldots, (m-1)(i+1), m(i+1))$. For example we have $\mathbf{r}_{6,7,6}=(7,17,22,27,32,37,42,49)$, $\mathbf{r}_{6,7,5}=(6,14,18,22,26,30,36,42)$ and $\mathbf{r}_{6,7,4}=(5,11,14,17,20,25,30,35)$. Note that we exclude $i=3$ since in that case $y_1+2y_{m-1}-2y_m=0$, a contradiction.

If $y_1-2y_2+y_3>0$, then $y_2-2y_3+y_4=0$ and the gap between $y_2$ and $y_3$ must be equal to the gap between $y_{m-1}$ and $y_m$, namely $a$, which implies that we have that $y_i=ia$ for $2\leq i\leq m$. Moreover, we also have $y_2=2b-2a$ and $y_4=2b$. So $b=2a$, but then $-y_1+y_2=0$, a contradiction. 

$\mathcal{S}_{14}$: From the tight inequalities in this set, we have that $y_1=a$ and $y_2=a$ for some $a\geq 0$. Moreover, we know that all gap inequalities must be tight, so $y_i=a$ for $1\leq i \leq m$. Finally, since $y_0-2y_1+y_3=0$, we have that $y_0=a$. This yields the ray $\mathbf{d}_{2,m}=(1,1,\ldots, 1)$.

Since all the extreme rays produced for $Q$ were shown to be in $\tropNU$ by Lemma~\ref{lem:constrdoubly} and Lemma~\ref{lem:extraysfromdoubly}, we have that $Q\subseteq \tropNU$ as desired.
\end{proof}

\maincorollary

\begin{proof}
This follows from Lemma~\ref{lem:extraysfromdoubly} and Theorem~\ref{thm:trop}, and the fact that one can check that none of the doubly extreme rays is in the double hull of the others.
\end{proof}
\section{Final remarks}\label{sec:further}

We first note that although we did not extensively use the results of Kopparty and Rossman \cite{koppartyrossman} in how we presented our results, they were immensely useful in coming up with the tropicalization. We also remark that our results show similarities to the tropicalization of stars described in \cite[Theorem 2.17]{BR}.

\begin{theorem}
Let $\mathcal{U}=\{S_0, S_1, \ldots, S_m\}$ where $S_i$ is the star graph with $i$ branches. Then $$\tropNU=\left\{ \begin{array}{lll}
\mathbf{y}\in\mathbb{R}^{m+1} | & -y_{1} + y_{2} \geq 0 &\\
& y_0 + y_{m-1} - y_m \geq 0 & \\
& y_{i-1}-2y_{i}+y_{i+1} \geq 0 & \forall 1 \leq i \leq m-1\\
& m \cdot y_{m-1} - (m-1) \cdot y_{m} \geq 0 & 
\end{array} \right\}.$$ 

The set $\tropNU$ is the double hull of the following doubly extreme rays: $(1,0,0,\ldots, 0)$, $(1,1,\ldots, 1)$, $(1,1,2,3,\ldots, m)$, and $(1,2,3,4,\ldots,m+1)$. 
\end{theorem}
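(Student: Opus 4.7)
The plan is to mirror the two-pronged strategy used in the proof of Theorem~\ref{thm:trop}. Writing $Q$ for the polyhedral cone displayed on the right-hand side, I would first show $\tropNU\subseteq Q$ by certifying that each defining inequality of $Q$ lifts to a valid pure binomial inequality on $\NU$, and then show $Q\subseteq\tropNU$ by enumerating the extreme rays of $Q$ and realizing them as points of $\tropNU$.

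For the first inclusion, the key observation is that $\hom(S_0;G)=|V(G)|$ and $\hom(S_i;G)=\sum_{v\in V(G)}d(v)^i$ for $i\geq 1$, so every defining inequality becomes a standard moment inequality on the degree sequence of $G$. The gap inequalities $y_{i-1}-2y_i+y_{i+1}\geq 0$ amount to log-convexity of power sums, $\bigl(\sum_v d(v)^{i-1}\bigr)\bigl(\sum_v d(v)^{i+1}\bigr)\geq \bigl(\sum_v d(v)^i\bigr)^2$, a direct application of Cauchy-Schwarz (equivalently AM-GM after unlabeling two copies of an appropriately labeled star, in the style of Section~\ref{sec:inequalities}). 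The inequality $-y_1+y_2\geq 0$ reduces to $d(v)^2\geq d(v)$ on nonnegative integers summed over $v$, and $y_0+y_{m-1}-y_m\geq 0$ follows termwise from $|V(G)|\cdot d(v)^{m-1}\geq d(v)\cdot d(v)^{m-1}=d(v)^m$. For the trailing inequality $m\,y_{m-1}-(m-1)\,y_m\geq 0$, letting $\Delta=\max_v d(v)$, one has $\hom(S_m;G)\leq\Delta\cdot\hom(S_{m-1};G)$ and $\hom(S_{m-1};G)\geq\Delta^{m-1}$, so $\hom(S_m;G)^{m-1}\leq\Delta^{m-1}\hom(S_{m-1};G)^{m-1}\leq\hom(S_{m-1};G)^m$.

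For the second inclusion, I would first realize the four listed rays: $(1,0,\ldots,0)$ via one edge together with $n-2$ isolated vertices as $n\to\infty$; $(1,1,\ldots,1)$ via a single edge; $(1,1,2,3,\ldots,m)$ via the star $S_n$ as $n\to\infty$, since $\hom(S_i;S_n)=n^i+n$ for $i\geq 1$; and $(1,2,3,\ldots,m+1)$ via $K_{n,n}$ as $n\to\infty$, since $\hom(S_i;K_{n,n})=2n^{i+1}$. I would then enumerate the extreme rays of $Q$ by a case analysis on which of the $m-1$ gap inequalities are tight. Since $Q$ has only three non-gap inequalities and an extreme ray in $\mathbb{R}^{m+1}$ requires $m$ tight inequalities among the $m+2$ present, at most two gap inequalities can be strict at any extreme ray; the sequence $(y_0,y_1,\ldots,y_m)$ is therefore piecewise linear with at most two breakpoints, and the tight non-gap constraints pin down the free parameters. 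For every parameterized family that arises, I would write an explicit tropical-sum decomposition in the four listed rays; for example, the family $(j,j,\ldots,j,j+1,j+2,\ldots,m)$, with $j$ repeated through index $j$, equals $j\cdot(1,1,\ldots,1)\oplus(1,1,2,3,\ldots,m)$. The main obstacle is the extreme-ray bookkeeping: one must discard infeasible combinations of tight constraints (those forcing a negative coordinate) and produce a clean tropical-sum decomposition for every surviving family. Once this is done, the corollary that the four rays form the complete set of doubly extreme rays follows by verifying that none of them lies in the double hull of the other three.
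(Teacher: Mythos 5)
The paper does not prove this theorem; it is quoted from \cite[Theorem 2.17]{BR} in the final-remarks section, so there is no in-paper proof to compare against. That said, your proposal correctly mirrors the two-inclusion strategy used for the paper's own main result (Theorem~\ref{thm:trop}), and the parts you do carry out are sound. The reduction to degree sequences is the right move here and makes the first inclusion $\tropNU\subseteq Q$ cleaner than it was for almost-stars: using $\hom(S_0;G)=|V(G)|$ and $\hom(S_i;G)=\sum_v d(v)^i$, the gap inequalities are Cauchy--Schwarz (log-convexity of power sums), $-y_1+y_2\geq 0$ and $y_0+y_{m-1}-y_m\geq 0$ are termwise, and your chain $\hom(S_m;G)^{m-1}\leq \Delta^{m-1}\hom(S_{m-1};G)^{m-1}\leq \hom(S_{m-1};G)^m$ for the last inequality is correct. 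The four realizing graph sequences (edge plus isolated vertices, a single edge, the star $S_n$, and $K_{n,n}$) are exactly what one wants and give the listed rays.

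The genuine gap is that the enumeration of extreme rays of $Q$ and their tropical-sum decompositions is only sketched. This is not a minor omission: in the analogous proof for almost-stars it is the bulk of the argument (fourteen cases in Theorem~\ref{thm:trop}), and here, while the cone $Q$ has fewer non-gap facets so the casework should be lighter, you still need to (a) identify which subsets of the three non-gap inequalities can be simultaneously tight with enough gap inequalities to pin down a ray, (b) solve for each resulting one- or two-breakpoint family, and (c) express each such family as a tropical sum of conical combinations of the four realized rays, as you did for $(j,\ldots,j,j+1,\ldots,m)=j\vec{1}\oplus(1,1,2,\ldots,m)$. Your counting remark also needs sharpening: if only one non-gap inequality is tight then all $m-1$ gap inequalities must be tight, if two are tight then at most one gap can be strict, and only when all three non-gap inequalities are tight can two gaps be strict. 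Until those cases are worked out, the claim $Q\subseteq\tropNU$ and the double-hull description are asserted but not established.
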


Furthermore, in this paper, we proved Theorem~\ref{thm:trop} by considering all extreme rays. However, it is possible to prove this theorem using only the doubly extreme rays (without finding all extreme rays) by doing some additional casework. This proof will be included in the thesis of the first-named author. 

Finally, we are interested in computing tropicalizations of other families of trees. One family of particular interest is a generalization of our $S_{2,1^k}$'s that includes star-like graphs where branches can have any length. We think of these almost-stars through the partition of their number of edges according to their branches. By restricting which kinds of partitions are allowed, it might be possible to compute other tropicalizations. Note that the tropicalization of paths in \cite{BR} is much more complicated than the tropicalization of stars. Therefore, given that these almost-stars contain longer paths, computing their tropicalization might be highly non-trivial.
\bibliographystyle{alpha}
\bibliography{ref}
\end{document}